\documentclass[11pt]{amsart}
\usepackage{fullpage,hyperref}
\usepackage{tikz-cd}
\usepackage{color,soul}
\usepackage{amssymb}
\usepackage[capitalise, nameinlink]{cleveref}
\usepackage[all]{xy}
\usepackage{enumitem}

\makeatletter
\@namedef{subjclassname@2010}{%
  \textup{2010} Mathematics Subject Classification}
\makeatother
\newtheorem{theorem}{Theorem}[section]
\newtheorem{prop}[theorem]{Proposition}
\newtheorem{cor}[theorem]{Corollary}
\newtheorem{lem}[theorem]{Lemma}
\theoremstyle{definition}
\newtheorem{defn}[theorem]{Definition}
\newtheorem{rem}[theorem]{Remark}

\newtheorem{pgr}[theorem]{}

\newtheorem{qst}[theorem]{Question}
\newtheorem{ntn}[theorem]{Notation}
\newcommand{\bset}[1]{\bigl\{ #1 \bigr\}}

\newcounter{IntroCount}

\newtheorem{mainthm}[IntroCount]{Theorem}
\newtheorem{maincor}[IntroCount]{Corollary}

\newcommand{\ep}{\varepsilon}

\newcommand{\ld}{\lambda}

\newcommand{\ph}{\varphi}
\newcommand{\ps}{\psi}
\newcommand{\rh}{\rho}

\newcounter{cuax}

\renewcommand{\thecuax}{O\arabic{cuax}}

\def\N{{\mathbb{N}}}

\def\k{{\mathcal{K}}}

\newcommand{\hm}{homomorphism}
\renewcommand{\k}{\mathcal{K}}

\newcommand{\ca}[1]{$\mathrm{C}^*$-algebra#1}
\newcommand{\Cu}{\mathrm{Cu}}

\hypersetup{
	colorlinks,
	linkcolor=blue,          
	filecolor=magenta,      
	urlcolor=cyan           
}

\title{Continuous functions over a pure C*-algebra}

\author{Apurva Seth, Eduard Vilalta}
\address{Apurva~Seth,
Mathematical Institute, University of Oxford, Radcliffe Observatory, Andrew Wiles Building, Woodstock Rd, Oxford OX2 6GG, United Kingdom.}
\email{apurvaseth14@gmail.com}
\address{Eduard~Vilalta, 
Department de Matem\`{a}tiques, Universitat Polit\`{e}cnica de Catalunya, Diagonal 647, Barcelona, Spain.}
\email{eduard.vilalta@upc.edu}
\urladdr{www.eduardvilalta.com}

\subjclass[2020]%
{Primary
46L05. 
}
\keywords{$C^*$-algebras, pureness, strict comparison, Global Glimm Property}

\date{\today}

\thanks{AS was supported by the Engineering and Physical Sciences Research Council (EP/X026647/1) and EV was partially supported by the Spanish State Research Agency (grant No. PID2023-147110NB-I00). For the purpose of open access, the authors have applied a CC BY public copyright license to any author-accepted manuscript arising from this submission.}

\begin{document}
\begin{abstract}
    Let $X$ be a compact metric space, and let $A$ be a pure \ca{}. We show that $C(X,A)$ is pure whenever
    \begin{enumerate}
        \item $A$ is simple; or 
        \item every quotient of $A$ is stably finite (e.g., $A$ has stable rank one).
    \end{enumerate}

    Using permanence properties of pureness, we prove that the tensor product of any such $A$ with any ASH-algebra is pure.
\end{abstract}
\maketitle

\section{Introduction}

The properties of \emph{strict comparison} and \emph{almost divisibility} play a fundamental role in the modern structure theory of \ca{s}. As defined by Winter \cite{Win12NuclDimZstable}, \ca{s} that satisfy both properties are termed \emph{pure}. In the simple finite setting, such algebras should be regarded as the correct analogues of II$_1$ factors; roughly speaking, the defining properties of pureness correspond to two key features of projections in II$_1$factors, adapted to the setting of positive elements ---a necessary adaptation given that many \ca{s} contain few or no nontrivial projections.

Just as the unique trace on a II$_1$ factor $\mathcal{N}$ determines the order of projections, \emph{strict comparison} \cite{Bla88Comparison} asserts that tracial state data determines when one positive operator can be approximately compressed to another (Remark \ref{rem:StCompAlmUnperf}). Similarly, \emph{almost divisibility} (Definition \ref{def_almost_divisibility}) serves as the analogue to the fact that projections in $\mathcal{N}$ can be divided arbitrarily. Whilst these two properties are automatic for II$_1$ factors, they can fail for simple \ca{s} \cite{Tom08ClassificationNuclear,Vil99SRSimpleCa}. Consequently, determining which algebras satisfy these properties ---and, in particular, developing mechanisms to deduce pureness--- has been a major focus in the field for the past few years \cite{AGKEP25,AntPerThiVil24arX:PureCAlgs,Rob25Self}. Examples of pure \ca{s} can be found both in the nuclear and non nuclear case, and include $\mathcal{Z}$-stable algebras, purely infinite algebras \cite{kirchberg2000non}, von Neumann algebras with trivial type I summand, and reduced (twisted) group \ca{s} of acylindrically hyperbolic groups \cite{FloLisCobPag:PureTwi,RauThiVil:StCompTwi}.

In the realm of simple nuclear \ca{s}, pureness features prominently in the Toms-Winter conjecture \cite{CasEviTikWhiWin21NucDimSimple,Ror04StableRealRankZ,Win12NuclDimZstable} and serves as a crucial regularity condition in the Elliott classification program \cite{Whi23ICM,Win18ICM}. More generally, the notion is expected to play an important role in the classification of $^*$-homomorphisms \cite{CarGabSchTikWhi23arX:ClassifHom1,Sza26:UniqThmKK}, potentially by weakening the requirement of $\mathcal{Z}$-stability on the codomain to that of pureness. In fact, as shown in \cite[Theorem~7.3.11]{antoine2018tensor}, one may think of pureness as $\mathcal{Z}$-stability at the level of the Cuntz semigroup.

Of the two defining properties of pureness, strict comparison has received the most attention in recent years, finding deep connections with the theory of group \ca{s} \cite{AGKEP25,Oza25:Proxim}, time-frequency analysis \cite{BedEnsvVelt22,EnsThiVil25Criteria}, and the recent negative resolution to Tarski's problem \cite{KES:NegativeTarski}. However, despite its importance, strict comparison is often difficult to verify directly. For this reason, attention has shifted towards the stronger, more rigid property of pureness, which is particularly susceptible to \emph{dimension reduction phenomena}. Specifically, it was recently proved in \cite{AntPerThiVil24arX:PureCAlgs} that any $(m, n)$-pure \ca{} ---a formal `quantized' weakening of pureness--- is automatically pure. This result allows one to deduce pureness (and thus strict comparison) in situations where direct verification was previously out of reach. It has also paved the way for new permanence results; for instance, it was established in \cite{perera2025extensions} that an extension of pure \ca{s} is itself pure, mirroring the corresponding property for $\mathcal{Z}$-stability \cite[Theorem~4.3]{TomWin07ssa}. An important question in this line of inquiry, raised at a number of conferences, is whether a minimal tensor product $B \otimes A$ is pure whenever $A$ is. In the present paper, we investigate this question for the case $B = C(X)$.

\begin{mainthm}[\ref{Purehomogenous},~\ref{prp:MainNonSimp}]\label{prp:MainThm1}
    Let $X$ be a compact metric space, and let $A$ be a pure \ca{}. Assume additionally that
    \begin{enumerate}
        \item $A$ is simple; or 
        \item every quotient of $A$ is stably finite.
    \end{enumerate}

    Then, $C(X,A)$ is pure.
\end{mainthm}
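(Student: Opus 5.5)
The plan is to check pureness at the level of the Cuntz semigroup: show that $\Cu(C(X,A))$ is almost unperforated and almost divisible. The main reduction is the dimension-reduction theorem of \cite{AntPerThiVil24arX:PureCAlgs}, which says that $(m,n)$-pure \ca{s} are pure. So it suffices to show that $C(X,A)$ is $(m,n)$-pure for some $m,n$ depending only on the covering dimension of $X$.

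Since $X$ is a compact metric space, it is an inverse limit of finite polyhedra (finite simplicial complexes). Hence $C(X,A)$ is an inductive limit of algebras $C(P,A)$. Pureness passes to inductive limits, since almost unperforation and almost divisibility both pass to sequential limits in $\Cu$. I therefore first reduce to the case where $X$ is a finite CW complex. I then induct over the cells, using the extension theorem of \cite{perera2025extensions}. The ideal $C_0(\text{open cell},A)$ is a suspension-type algebra $C_0(\mathbb{R}^d)\otimes A$, and the quotient is $C(\text{skeleton},A)$, which is pure by induction. This leaves two things to prove:
\begin{enumerate}
\item pureness of $C_0(\mathbb{R}^d,A)$, or equivalently of $C_0((0,1)^d)\otimes A$;
\item the base case $C(\mathrm{pt},A)=A$, which holds by hypothesis.
\end{enumerate}
Because extensions of pure algebras are pure, iterated suspensions can be handled by the same extension argument applied to $C([0,1]^d,A)$. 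This reduces item (1) to showing that $C([0,1]^d,A)$ is pure.

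For $C([0,1]^d,A)$, I would use the description of $\Cu(C(K,A))$ as lower-semicontinuous-type paths or functions $K\to\Cu(A)$, at least in the favourable cases.
\begin{itemize}
\item In case (1), $A$ is simple and pure, so $\Cu(A)$ is either $\{0,\infty\}$ (purely infinite) or $V(A)\sqcup\mathrm{LAff}(T(A))_{++}$, i.e.\ the same as $\Cu(A\otimes\mathcal{Z})$. In the purely infinite case, $C(X,A)$ is strongly purely infinite, hence pure. In the stably finite case I would argue that $\Cu(C(X,A))\cong\Cu(C(X,A\otimes\mathcal{Z}))$, or at least that comparison and divisibility can be checked pointwise and then patched together over a finite cover of $X$. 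The number of pieces in the patching is bounded by $\dim X+1$, and this is exactly what yields $(m,n)$-pureness rather than pureness outright.
\item In case (2), every quotient of $A$ is stably finite. Here I would use that $\Cu(A)$ then has the property that functionals on $\Cu(C(X,A))$ restrict to fibres. I would run the same local-to-global argument: if $x\le_s y$ in the functional sense, then it holds fibrewise. Fibrewise comparison in the pure algebra $A$ gives $x(t)\ll y(t)$ locally, and these local comparisons glue over a cover of multiplicity $\dim X+1$ to give $x\le (\dim X+1)y$-type estimates. Stable finiteness of quotients is what lets me exclude the situation where a functional is infinite on an ideal in a way that is invisible fibrewise. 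Almost divisibility is handled similarly: divide fibrewise, then glue.
\end{itemize}

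I expect the main obstacle to be the gluing step. Fibrewise comparisons in $A$ are only approximate (they hold up to cut-downs $(a-\ep)_+$), and turning them into a global Cuntz subequivalence in $C(X,A)$ requires continuous choices of implementing elements over overlapping open sets. The price of this gluing is a multiplicity factor, which dimension reduction then absorbs. In the non-simple case there is a further difficulty: the ideal lattice of $A$ interacts with $X$, so the fibrewise functionals may not separate elements. The finiteness hypothesis on quotients is needed precisely to control this.
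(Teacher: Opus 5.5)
Your comparison argument is the paper's: fibrewise comparison in $A$, glued over a cover split into $m+1$ colour classes, gives $m$-comparison of $C(X,A)$ (Lemma~\ref{lem:CuntzSubC(X,A)}, Proposition~\ref{comp_c(x,A)}). The reduction to finite-dimensional $X$ via inductive limits also matches the paper.

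\textbf{The gap is in divisibility.} Suppose you divide each $f(x_{k,j})$ in $\Cu(A)$ and glue the pieces with a partition of unity. The lower bound $[(f-\varepsilon)_+]\leq (N+1)y$ survives, but the upper bound does not. At a point lying in several colour classes you add several independently chosen $N$-th fractions of (roughly) $f(t)$. So you only get $Ny\leq M[f]$ with $M$ depending on $m$ (Proposition~\ref{robert_analogy} has $M=(m+1)^2$), and this method does not give $Ny\leq[f]$. What you actually obtain is the weak condition $x'\ll Ny\ll Mx$ of Corollary~\ref{robert_analogy_2}. That is not $n$-almost divisibility for any $n$, so $(m,n)$-pureness is not established and the theorem of \cite{AntPerThiVil24arX:PureCAlgs} cannot be invoked.

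Nor does $m$-comparison cancel $M$ by itself. To pass from $NMy\leq Mx$ to $ky\leq x$, you need $y_0+\dots+y_m\leq x$ with each $y_j$ a bounded fraction of $x$, so that $ky\leq_s y_j$ for all $j$. Only then does $m$-comparison give $ky\leq \sum_j y_j\leq x$. A sanity check: your gluing never genuinely uses hypothesis (1) or (2). If it worked, it would prove pureness of $C(X,A)$ for every pure $A$. The paper does not prove that statement and explicitly relates it to the open Global Glimm Problem.

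\textbf{The missing ingredient is this bounded splitting.} Theorem~\ref{prp:Gen_DimRed} shows that, given $m$-comparison and the weak divisibility, the following are equivalent:
\begin{itemize}
\item pureness;
\item the Global Glimm Property;
\item existence of $L$ such that every $x'\ll x$ admits $y_0+y_1\leq x$ with $x'\ll Ly_0,Ly_1$.
\end{itemize}
The hypotheses are used exactly to obtain this.
\begin{itemize}
\item In case (1), the Global Glimm Property of $C(X,A)$ comes from \cite[Theorem~4.3]{BlaKir04GlimmHalving}.
\item In case (2), Proposition~\ref{prp:CXNoQuot} builds $d^0=g_\delta(c)$ and $d^1=(1-g_{\delta/2}(c))^{1/2}f(1-g_{\delta/2}(c))^{1/2}$. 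It uses the absence of nonzero compact properly infinite classes in the quotients $A/\langle d^1(t)\rangle$ to force $f(t)$ into the ideal generated by $d^1(t)$ for every $t$. Then Lemma~\ref{lem:CuntzSubC(X,A)}, the quasitrace estimate $d_\tau([f])\leq 2d_\tau([d^1])$, and $m$-comparison give $x\leq 3(m+1)[d^1]$.
\end{itemize}

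Your proposed substitutes do not supply this. The identification $\Cu(C(X,A))\cong\Cu(C(X,A\otimes\mathcal{Z}))$ is unproven and would by itself imply the theorem, since the right-hand algebra is $\mathcal{Z}$-stable, hence pure. The remark that stable finiteness of quotients excludes functionals that are infinite in a fibrewise-invisible way is not an argument. Finally, the cell-by-cell reduction to cubes $[0,1]^d$ is valid, since ideals and extensions of pure algebras are pure. But nothing specific to cubes is used afterwards, so it does not touch the actual difficulty.
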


Before outlining the proof strategy and the challenges involved in establishing Theorem \ref{prp:MainThm1}, we highlight some important consequences. First, combining Theorem \ref{prp:MainThm1} with several permanence properties of pureness, we obtain the following application:

\begin{mainthm}[\ref{prp:ASHtenPure}]\label{prp:MainThm2}
    Let $A$ be a pure \ca{} satisfying (1) or (2) in Theorem \ref{prp:MainThm1}, and let $B$ be a unital separable ASH-algebra. Then, $A\otimes B$ is pure.
\end{mainthm}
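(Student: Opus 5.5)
The plan is to reduce Theorem \ref{prp:MainThm2} to Theorem \ref{prp:MainThm1} using the standard permanence properties of pureness. A unital separable ASH-algebra $B$ is an inductive limit $B=\varinjlim B_n$ of recursive subhomogeneous algebras. Minimal tensor products commute with inductive limits, so $A\otimes B\cong \varinjlim (A\otimes B_n)$. Pureness passes to sequential inductive limits, since strict comparison and almost divisibility are expressed in the Cuntz semigroup, and $\Cu$ is continuous with respect to inductive limits. It therefore suffices to show that $A\otimes B_n$ is pure for each $n$. By the limit theorem one may also replace $A$ by its separable pure subalgebras satisfying (1) or (2), but this should not be needed.

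Next I would use induction on the length of a recursive subhomogeneous decomposition. Each $B_n$ is obtained by iterated pullbacks
\[
R_{k+1} = R_k \oplus_{C(\Omega_k, M_{m_k})} C(X_k, M_{m_k}),
\]
where $X_k$ is compact metric and $\Omega_k\subseteq X_k$ is closed. Equivalently, there is an extension
\[
0\to C_0(X_k\setminus \Omega_k, M_{m_k})\to R_{k+1}\to R_k\to 0 .
\]
Tensoring with $A$ is exact at this level, because the ideal is nuclear. This gives an extension
\[
0\to C_0(X_k\setminus\Omega_k, M_{m_k}(A))\to A\otimes R_{k+1}\to A\otimes R_k\to 0 .
\]
Extensions of pure \ca{s} are pure \cite{perera2025extensions}, so the inductive step needs two inputs: $A\otimes R_k$ is pure by induction, and the ideal is pure.

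For the ideal, note that $M_{m_k}(A)$ is pure and still satisfies (1) or (2). Simplicity is preserved, and stable finiteness of all quotients is preserved because quotients of $M_m(A)$ are $M_m$ of quotients of $A$. Theorem \ref{prp:MainThm1} then gives that $C(X_k, M_{m_k}(A))$ is pure. The algebra $C_0(X_k\setminus\Omega_k, M_{m_k}(A))$ is a closed two-sided ideal of it. Pureness passes to ideals, since the Cuntz semigroup of an ideal is an ideal of the Cuntz semigroup and both comparison and divisibility restrict to it. The base case is $R_0=C(X_0,M_{m_0})$, so $A\otimes R_0=C(X_0,M_{m_0}(A))$, which is again handled by Theorem \ref{prp:MainThm1}.

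The main obstacle is bookkeeping of the hypotheses rather than new analysis. We need the permanence results (ideals, extensions, inductive limits) in exactly the forms available, for instance whether the extension result needs separability or a unital quotient. We also need the hypotheses (1) and (2) to be stable under matrix amplification, which I checked above. If the extension theorem requires separability, I would first write $A$ as a limit of separable pure subalgebras with the same property, which works in case (2) and in case (1) via a separable simple pure subalgebra, and then pass to the limit.
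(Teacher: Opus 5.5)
Your proof is essentially the paper's: reduce to recursive subhomogeneous building blocks via inductive limits and induct on the length, using Theorem~\ref{prp:MainThm1} for $C(X,M_m(A))$ and the extension theorem of \cite{perera2025extensions}; the paper phrases the inductive step through Pedersen's pullback theorem and Lemma~\ref{PurePullback}, and that lemma's proof is exactly your extension with ideal $C_0(X\setminus X^{(0)},M_m(A))$. One small correction: the tensored sequence is exact because the quotient $R_k$ (indeed $R_{k+1}$ itself) is subhomogeneous and hence nuclear, not because the ideal is nuclear, which would not suffice when $A$ is not exact.
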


As a particular example of this result, Theorem \ref{prp:MainThm2} shows that the tensor product of any Villadsen algebra of type I with the reduced group \ca{} of any free group is pure. It is currently unknown whether these specific examples are $\mathcal{Z}$-stable (Question \ref{qst:VillZstab}), highlighting the utility of pureness as a regularity invariant. In fact, Theorem \ref{prp:MainThm2} is slightly more general: As we prove in Theorem \ref{DpureRSHA}, any recursive subhomogeneous \ca{} over $A$ \cite[Definition~3.2]{archey2020structure} (Definition \ref{RSHA}) is pure whenever $A$ is.

A further application of Theorem \ref{prp:MainThm2} arises in the study of determining which nonamenable groups have a reduced group \ca{} with strict comparison. Since $C_r^*(G)$ is a direct sum of simple pure \ca{s} whenever $G$ is countable and acylindrically hyperbolic \cite[Proof of Theorem~C]{FloLisCobPag:PureTwi}, and $C_r^*(H)$ is subhomogeneous whenever $H$ is virtually abelian, we get:

\begin{maincor}\label{prp:MainCor1}
 Let $G$ be a countable acylindrically hyperbolic group, and let $H$ be a virtually abelian group. Then, $C_r^*(G\times H)$ is pure. In particular, it has strict comparison.
\end{maincor}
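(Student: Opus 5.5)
The plan is to deduce the corollary directly from Theorem \ref{prp:MainThm2}. I identify $C_r^*(G\times H)$ with a tensor product and then check the hypotheses of that theorem. Since the reduced group \ca{} of a direct product is the minimal tensor product of the reduced group \ca{s}, we have
\[
C_r^*(G\times H)\cong C_r^*(G)\otimes C_r^*(H).
\]
By the result quoted from \cite[Proof of Theorem~C]{FloLisCobPag:PureTwi}, $C_r^*(G)$ is a finite direct sum $A_1\oplus\cdots\oplus A_k$ of simple pure \ca{s}. It is unital and separable because $G$ is countable. The tensor product distributes over the direct sum, so
\[
C_r^*(G\times H)\cong \bigoplus_{i=1}^k A_i\otimes C_r^*(H).
\]
A finite direct sum of pure \ca{s} is pure, since the Cuntz semigroup of the sum is the product of the Cuntz semigroups and both strict comparison and almost divisibility are checked coordinatewise. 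It therefore suffices to show that each $A_i\otimes C_r^*(H)$ is pure.

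Each $A_i$ is simple and pure, so it satisfies condition (1) of Theorem \ref{prp:MainThm1}. It remains to check that $B=C_r^*(H)$ is a unital separable ASH-algebra. If $H$ is countable, take a finite-index abelian subgroup $N\le H$. Then $C_r^*(H)$ embeds into $M_{[H:N]}(C^*(N))=M_{[H:N]}(C(\widehat N))$, which follows for example from the induced-representation picture. Hence $C_r^*(H)$ is subhomogeneous, as stated in the introduction. It is unital, and it is separable because $H$ is countable. A separable unital subhomogeneous \ca{} is recursive subhomogeneous, hence an ASH-algebra (a trivial inductive limit). This follows from Phillips' results on recursive subhomogeneous algebras, or alternatively one can use Theorem \ref{DpureRSHA} directly. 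Theorem \ref{prp:MainThm2} then gives that $A_i\otimes B$ is pure, and hence $C_r^*(G\times H)$ is pure. Strict comparison follows because it is one of the two defining properties of pureness.

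The main obstacle is the countability of $H$, which the statement does not assume. If $H$ is uncountable, $C_r^*(H)$ is not separable, and Theorem \ref{prp:MainThm2} as stated does not apply. I would handle this by writing $H$ as the directed union of countable subgroups $H_\lambda$ that contain $N\cap H_\lambda$ with bounded index. This gives an inductive limit $C_r^*(G\times H)=\varinjlim C_r^*(G\times H_\lambda)$ with injective connecting maps. I would then use that pureness passes to inductive limits, which holds because the Cuntz semigroup is continuous and almost unperforation and almost divisibility are preserved under limits in $\Cu$. If the paper implicitly assumes countable groups, this step is unnecessary and the argument reduces to the identifications above.
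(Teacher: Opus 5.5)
Your proposal is correct and follows the same route as the paper. You identify $C_r^*(G\times H)$ with $C_r^*(G)\otimes C_r^*(H)$, use \cite[Proof of Theorem~C]{FloLisCobPag:PureTwi} to write $C_r^*(G)$ as a finite direct sum of simple pure \ca{s}, observe that $C_r^*(H)$ is subhomogeneous (hence a unital ASH-algebra), and apply Theorem~\ref{prp:MainThm2}. Your extra step for uncountable $H$ is a sound addition that covers a separability hypothesis of Theorem~\ref{prp:MainThm2} the paper tacitly assumes: you pass to the directed union of countable subgroups and use that pureness passes to inductive limits, which requires continuity of $\Cu$ along arbitrary, not just sequential, directed limits.
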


In addition to significantly enlarging the known class of pure \ca{s}, Theorems \ref{prp:MainThm1} and \ref{prp:MainThm2} shed new light on the \emph{Global Glimm Problem}, a major open problem in the field \cite[Problem~LXXIII]{SchTikWhi99arX:Prob}. The problem asks whether the \emph{Global Glimm Property} ---a generalization of non-elementariness for non-simple \ca{s} \cite{KirRor02InfNonSimpleCalgAbsOInfty} which we recall in Paragraph \ref{pgr:GGP}--- is equivalent to having no nonzero elementary ideals of quotients, and has been solved affirmatively in numerous cases (see \cite[Section~4]{Vil25:IntroGGP} for an overview). While it was previously known that $C(X, A)$ possesses the Global Glimm Property whenever $X$ is finite-dimensional and $A$ is simple and has the property \cite[Theorem~4.3]{BlaKir04GlimmHalving}, this was not known for pure \ca{s} $A$ satisfying condition (2) of Theorem \ref{prp:MainThm1}, nor for the tensor products appearing in Theorem \ref{prp:MainThm2}. As we explain below, this is not a consequence of our results; rather, it is a necessary intermediate step in our proof of pureness.

Apart from the applications presented in this paper, Theorem \ref{prp:MainThm1} has further consequences. In particular, this result will be used in forthcoming work of the first named author with Evington, Hua, Schafhauser, and White, which will study \emph{$K$-stability} for pure \ca{s} satisfying conditions as in Theorem~\ref{prp:MainThm1}. It has been shown in \cite{lin2025strict} that finite simple pure \ca{s} have stable rank one and are therefore $K_1$-bijective. The future work will focus on the stronger property of $K$-stability for this class.

\subsection*{Strategy of the proof} Given a \ca{} $A$ with strict comparison, it is well-known that $C(X,A)$ need not have comparison. Topological obstructions, often manifesting as non-trivial vector bundles, can impede comparison even in simple settings ---for example, when $A=\mathbb{C}$ and $X=\mathbb{T}^4$ \cite{Vil98SimpleCaPerforation}. Thus, one of the challenges in establishing Theorem \ref{prp:MainThm1} is that strict comparison and almost divisibility cannot be verified separately.

Our strategy relies on a multi-step dimension reduction argument. First, we restrict our attention to the case where $X$ is finite-dimensional. In Sections \ref{sec:CompCuA} and \ref{sec:DivCuA}, we prove that if $X$ is a compact metric space of dimension $m$ and $A$ is pure, the algebra $C(X, A)$ satisfies a weakening of $(m,n)$-pureness, with $n$ depending only on $m$. Concretely, we see that $C(X,A)$ has $m$-comparison (Proposition \ref{comp_c(x,A)}) and satisfies a \emph{weak} version of $n$-almost divisibility (Corollary \ref{robert_analogy_2}); see Definitions \ref{dfn:MComp} and \ref{def_almost_divisibility} for details.

As Corollary \ref{robert_analogy_2} does not establish $n$-almost divisibility for any $n$, the rigidity results from \cite{AntPerThiVil24arX:PureCAlgs} cannot be applied directly. Instead, building on the techniques from \cite{AntPerThiVil24arX:PureCAlgs}, we prove a new general reduction result (Theorem \ref{prp:Gen_DimRed}), which we then use in Theorem \ref{prp:CXAPureIfGGP} to deduce that $C(X, A)$ is pure if and only if it possesses the Global Glimm Property. Relying on the fact that all these properties pass to inductive limits, we are able to dispense with the assumption of finite dimensionality on $X$. Thus, the proof of Theorem \ref{prp:MainThm1} reduces to proving the Global Glimm Property for $C(X,A)$. For \ca{s} satisfying condition (1), this follows from \cite[Theorem~4.3]{BlaKir04GlimmHalving}; for those satisfying (2), the result is new and is proven in Proposition \ref{prp:CXNoQuot}.

\subsection*{Acknowledgments} This paper started during EV's visit to the University of Oxford during the \emph{Mini Course: Topological Phenomena in the Cuntz semigroup}, which was given by Andrew Toms and organized by Stuart White.  We are grateful to both, as well as to the University of Oxford and St John’s College, for their kind hospitality. We would further like to thank Stuart White for his helpful feedback on the paper, and Hannes Thiel for pointing out Corollary \ref{prp:MainCor1}. AS is  grateful to N. Christopher Phillips for valuable discussions concerning pureness of \ca{s} and for feedback on earlier drafts of this paper. AS also thanks Julian Buck, Dawn Archey, and Javad Mohammadkarimi for insightful discussions.

\section{Preliminaries}
Let $A$ be a \ca{}, and let $a, b$ be positive elements in $A$. As introduced in \cite{Cun78DimFct}, we say that  $a$ is \emph{Cuntz subequivalent} to $b$, written $a \precsim b$, if there is a sequence $(v_n)_{n = 1}^{\infty}$ in $A$
such that $\lim_{n} v_n b v_n^* = a$. We say that $a$ and $b$ are {\emph{Cuntz equivalent},
written $a \sim b$, if $a \precsim b$ and $b \precsim a$.   The relation $\precsim$ is transitive and reflexive, and $\sim$ is an equivalence relation.

\begin{defn}[{\cite{Coward}}]\label{df:CuA}
Let $A$ be a \ca. The \emph{Cuntz semigroup} of $A$ is defined as 
\[\Cu(A)=(A\otimes\k)_+/\!\sim.\]
 For a positive element $a\in (A\otimes\k)_+$, we denote by $[a]$ its Cuntz equivalence class, hence $\Cu(A)=\big\{[a]\colon a\in (A\otimes\k)_+\big\}$.
There is a natural partial order defined on $\Cu(A)$, namely \[[a]\leq [b]\quad\text{if}\quad a\precsim b.\]  
Further, the diagonal sum $a\oplus b$ induces an addition on $\Cu (A)$.
\end{defn}

\begin{ntn}\label{D:MinusEp}
Let $A$ be a \ca{}, let $a \in A_{+}$, and let $\ep > 0$. We define $(a-\varepsilon)_+:=f(a)$, where $f \colon [0,\infty) \to [0,\infty)$ is the function
\[
f(\lambda)
 = (\lambda - \ep)_{+}
 = \begin{cases}
     0             & \text{if } 0 \leq \lambda \leq \ep, \\
     \lambda - \ep & \text{if } \lambda > \ep.
   \end{cases}
\]
\end{ntn}

The following lemma collects several well-known results concerning $(a-\varepsilon)_+$.

\begin{lem}\label{L:CzBasic}
Let $A$ be a \ca.
\begin{enumerate}
\item\label{L:CutDowninHer}
If $a , b \in A_+$, such that $b \precsim a$, then for every $\ep>0$ there exists a positive $c\in \overline{a A a}$ with $(b-\ep)_+ \sim c$.
\item\label{L:CzBasic:MinIter}
Let $a \in A_{+}$ and let $\ep_1, \ep_2 > 0$.
Then
\[
\big( ( a - \ep_1)_{+} - \ep_2 \big)_{+}
 = \big( a - ( \ep_1 + \ep_2 ) \big)_{+}.
\]
\item\label{L:CuntzDirectSum}
Let $a, b \in A_+$,
and let $\ep_1, \ep_2 \geq 0$.
Then
\[
\big( a + b - (\ep_1 + \ep_2) \big)_{+}
   \precsim (a - \ep_1)_{+} + (b - \ep_2)_{+}
   \precsim (a - \ep_1)_{+} \oplus (b - \ep_2)_{+}.
\]
\item \label{L:Cuntzcutcutdown}
Let $\ep > 0$ and $\ld \geq 0$.
Let $a, b \in A$ satisfy $\| a - b \| < \ep$.
Then $(a - \ld - \ep)_{+} \precsim (b - \ld)_{+}$.
\end{enumerate}
\end{lem}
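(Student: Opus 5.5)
The four items are standard facts from functional calculus and Cuntz comparison, so I would prove each directly and in an order that lets the later items reuse the earlier ones.

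\textbf{Item (2).} I would start here, since it is pure functional calculus. For $\lambda\ge0$ one has $((\lambda-\ep_1)_+-\ep_2)_+=(\lambda-\ep_1-\ep_2)_+$: both sides vanish when $\lambda\le\ep_1+\ep_2$, and both equal $\lambda-\ep_1-\ep_2$ otherwise. The composition rule $g(f(a))=(g\circ f)(a)$ then gives the identity.

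\textbf{Item (4).} I would follow R{\o}rdam's argument. From $\|a-b\|<\ep$ we get $a\le b+\ep'$ for some $\ep'<\ep$ (working with selfadjoint $a,b$, as the statement implicitly requires). The cleanest route is the known fact that $\|a-b\|<\ep$ yields a contraction $d$ with $(a-\ep)_+=d\,b\,d^*$. Applying this to the shifted pair $(a-\ld)$ and $(b-\ld)$, I would then compare their positive parts.

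The main obstacle is exactly this step: passing the inequality through the cutoff at level $\ld$. I would handle it with Kirchberg--R{\o}rdam's lemma: if $0\le x\le y+\delta$, then $(x-\delta)_+\precsim y$. Take $x=(a-\ld)_+$ and $y=(b-\ld)_+$. Since $a\le b+\ep'$, operator monotonicity issues must be avoided, because $t\mapsto(t-\ld)_+$ is not operator monotone. To get around this, I would use the compression-based formulation of the lemma, which bypasses monotonicity. With it, $(a-\ld)_+$ is dominated in the required sense by $(b-\ld)_++\ep'$, and applying the lemma with $\delta=\ep'$, together with item (2), gives
\[
(a-\ld-\ep)_+\precsim (a-\ld-\ep')_+\precsim (b-\ld)_+ .
\]

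\textbf{Item (1).} Since $b\precsim a$, choose $v$ with $\|vav^*-b\|<\ep$. Item (4) with $\ld=0$ gives $(b-\ep)_+\precsim vav^*$, and by the standard lemma there is $w$ with $(b-\ep)_+=w(vav^*)w^*$. Put $z=wva^{1/2}$. Then $zz^*=(b-\ep)_+$, and $z^*z=a^{1/2}v^*w^*wva^{1/2}$ lies in $\overline{aAa}$. Setting $c=z^*z$, we have $zz^*\sim z^*z$, so $(b-\ep)_+\sim c$ with $c\in\overline{aAa}$, as required.

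\textbf{Item (3).} For the first relation, $\|(a+b)-((a-\ep_1)_++(b-\ep_2)_+)\|\le\ep_1+\ep_2$, because $\|a-(a-\ep_1)_+\|\le\ep_1$ and similarly for $b$. The norm bound is not strict, so I would apply item (4) with $\ep_1+\ep_2+\eta$ in place of $\ep$ and let $\eta\to0$, using that $(x-\eta)_+\to x$ and that $\precsim$ is closed under norm limits in the sense of the Cuntz relation. The case $\ep_1=\ep_2=0$ is trivial.

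For the second relation, $x+y\precsim x\oplus y$ for positive $x,y$. Writing $x+y=RR^*$ with $R=(x^{1/2}\ y^{1/2})$ and $R^*R\sim$ a matrix Cuntz equivalent to $x\oplus y$, this is standard (R{\o}rdam).
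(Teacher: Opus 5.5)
Your items (1)--(3) are essentially sound, but item (4), which you yourself flag as the main obstacle, is not actually proved. (The paper itself gives no argument; it cites Kirchberg--R\o{}rdam and Phillips for all four parts.)

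Two of the moves you describe for (4) fail as stated.
\begin{enumerate}
\item The contraction lemma ($\|a-b\|<\varepsilon\Rightarrow(a-\varepsilon)_+=dbd^*$) is for positive elements. It cannot be applied to the shifted pair $a-\lambda$, $b-\lambda$.
\item With $x=(a-\lambda)_+$ and $y=(b-\lambda)_+$, the hypothesis of the lemma you quote is the operator inequality $(a-\lambda)_+\le(b-\lambda)_++\varepsilon'$. This fails in general: $t\mapsto(t-\lambda)_+$ is not operator monotone, and not even operator Lipschitz, so $a\le b+\varepsilon'$ does not pass through the cutoff.
\end{enumerate}
You then appeal to an unspecified ``compression-based formulation'' under which $(a-\lambda)_+$ is ``dominated in the required sense''. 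That leaves exactly the needed step unargued.

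The repair is to compare before taking positive parts, using this fact: if $x$ is self-adjoint, $y\ge 0$ and $x\le y$, then $x_+\precsim y$. To prove it, fix $\eta>0$ and put $g_\eta(t)=((t-\eta)_+/t)^{1/2}$ for $t>0$ and $g_\eta(t)=0$ otherwise. Then $(x-\eta)_+=g_\eta(x)\,x\,g_\eta(x)\le g_\eta(x)\,y\,g_\eta(x)\precsim y$, and you let $\eta\to0$.

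Since $a-b\le\|a-b\|<\varepsilon$, in $\widetilde{A}$ you have $a-\lambda-\varepsilon\le b-\lambda\le(b-\lambda)_+$. Hence $(a-\lambda-\varepsilon)_+\precsim(b-\lambda)_+$ immediately. The same fact gives the first relation of (3) directly from $a+b-(\varepsilon_1+\varepsilon_2)\le(a-\varepsilon_1)_++(b-\varepsilon_2)_+$. This matches the cited source, where (4) appears as a corollary immediately after (3).

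Your arguments for (1) and (3) only use the case $\lambda=0$ for positive elements, which is the contraction lemma itself, so they survive the gap. In (1), note that the exact identity $(b-\varepsilon)_+=w\,vav^*\,w^*$ comes from applying that lemma to $\|b-vav^*\|<\varepsilon$. It does not follow from $(b-\varepsilon)_+\precsim vav^*$ alone.

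There is also a small slip in (3): $R^*R$ is not Cuntz equivalent to $x\oplus y$ in general. For $x=y=p$ a projection, $[R^*R]=[p]$ while $[x\oplus y]=2[p]$. Instead use $(x+y)\oplus 0=T(x\oplus y)T^*$ with $T=\begin{pmatrix}1&1\\0&0\end{pmatrix}$.
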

\begin{proof}
Part~(\ref{L:CutDowninHer}) is proved in \cite[Proposition~2.7]{kirchberg2000non}. 
Part~(\ref{L:CzBasic:MinIter}) appears in \cite[Lemma~2.5~(i)]{kirchberg2000non}. Parts~(\ref{L:CuntzDirectSum}) and (\ref{L:Cuntzcutcutdown}) appear as \cite[Lemma~1.5, Corollary~1.6]{phillips2014large}.
\end{proof}

A fundamental relation between elements in the Cuntz semigroup is that of \emph{compact containment}. Although this is generally defined in the context of \emph{abstract Cuntz semigroups} (known as \emph{$\Cu$-semigroups}), in this paper we restrict our attention to the \ca{ic} case and refer the interested reader to \cite{gardella2024modern} for a general exposition.

\begin{ntn}
    Let $A$ be a \ca{}, and let $x,y\in \Cu (A)$. We say that $x$ is \emph{compactly contained} in $y$, and write $x\ll y$, if $x=[a]$, $y=[b]$ and $a\precsim (b-\varepsilon)_+$ for some $\varepsilon>0$.

    An element $x\in\Cu (A)$ is said to be \emph{compact} if $x\ll x$.
\end{ntn}

\section{\texorpdfstring{Comparison in $\Cu(C(X,A))$}{Comparison in Cu(C(X,A))}}\label{sec:CompCuA}

In this section, we begin our investigation of the regularity properties of the Cuntz semigroup of $C(X, A)$. It is a well-known phenomenon in the commutative setting that the topological dimension of the underlying space $X$ governs the comparison properties of the algebra. Here, we extend this result to the $A$-valued case. Specifically, we show in Proposition \ref{comp_c(x,A)} that if $X$ is a finite-dimensional space of dimension $m$ and $A$ has strict comparison, then $C(X, A)$ has $m$-comparison. We start by recalling the definition of $m$-comparison and its relation to strict comparison; see Definition \ref{dfn:MComp} and Remark \ref{rem:StCompAlmUnperf} below.

\begin{ntn}\label{ntn:s_below}
    For $x, y$ elements of a Cuntz semigroup $\Cu (A)$, let us write $x \leq_s y$, if $(m + 1)\,x \leq m y$ for some $m\in\mathbb{N}$.
\end{ntn}

\begin{defn}[{\cite[Definition~2.1]{Win12NuclDimZstable}}]\label{dfn:MComp}
    Let $A$ be a \ca{} and let $n\in\N$. We say that $A$ (and its Cuntz semigroup $\Cu (A)$) has the \emph{$n$-comparison} property if $x \leq_s y_j$ for $x, y_j \in \Cu (A)$ and $j = 0, 1, \ldots, n$, implies $x\leq \sum_{j=0}^n y_j$.

    $\Cu (A)$ is said to be \emph{almost unperforated} if it has $0$-comparison.
\end{defn}

\begin{rem}\label{rem:StCompAlmUnperf}
    Let $A$ be a \ca{}. For any $[0,\infty]$-valued lower semi-continuous $2$-quasitrace $\tau$ on $A$ (which extends to $A\otimes\mathcal{K}$ and is denoted by the same symbol), the associated \emph{dimension function} $d_\tau\colon \Cu (A)\to [0,\infty]$ is defined by $d_\tau ([a])=\lim_n\tau (a^{1/n})$.

    As shown in \cite[Proposition~6.2]{elliott2011cone}, $\Cu(A)$ is almost unperforated (in the sense described above) if and only if, for every pair $a,b\in  (A\otimes\mathcal{K})_+$ such that $a$ is in the ideal generated by $b$ and $d_\tau ([a])<d_\tau([b])$ whenever $d_\tau ([b])=1$, one has $[a]\leq [b]$ in $\Cu (A)$.

    This property is known as \emph{strict comparison} (of positive elements by $[0,\infty]$-valued lower semi-continuous $2$-quasitraces). Note that, if $A$ is simple, $A$ has strict comparison if and only if, for every pair of non-zero elements $a,b\in  (A\otimes\mathcal{K})_+$ such that $d_\tau ([a])<d_\tau([b])$ for every normalized quasitrace $\tau$, one has $[a]\leq [b]$ in $\Cu (A)$. We refer the reader to \cite[Paragraph~3.4]{AntPerThiVil24arX:PureCAlgs} for an in-depth introduction.
\end{rem}

We next consider how $n$-comparison behaves under tensoring with a commutative \ca{}. In particular, tensoring with $C(X)$ weakens the comparison property in a way that reflects the topological dimension of $X$. First, we need the following lemma which is a slight modification of \cite[Propostion 3.6]{asadi2021radius}.

\begin{lem}\label{lem:CuntzSubC(X,A)}
    Let $X$ be a compact metric space of covering dimension $m$, and let $A$ be a \ca{}. Let $f,g_0,\ldots ,g_m\in C(X,A)_+$ be such that $f(x)\precsim g_k(x)$ for each $x\in X$ and $k= 0,1,\ldots,m$. Then, \[f\precsim g_0\oplus\ldots\oplus g_m.\]

    In particular, if $f,g\in C(X,A)_+$ are such that $f(x)\precsim g(x)$ for each $x\in X$, we have \[f\precsim g\otimes 1_{m+1}.\]
\end{lem}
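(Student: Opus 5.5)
It suffices to show $(f-\varepsilon)_+\precsim g_0\oplus\cdots\oplus g_m$ for every $\varepsilon>0$, since then $f\precsim g_0\oplus\cdots\oplus g_m$ follows by the standard characterization of Cuntz subequivalence. Fix $\varepsilon>0$. The plan is to localize the pointwise subequivalences, patch them with a partition of unity subordinate to a cover of multiplicity at most $m+1$, and assign one colour class of the cover to each summand $g_k$.

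\textbf{Step 1 (local witnesses).} Fix $x\in X$ and $k$. Since $f(x)\precsim g_k(x)$, there is $v\in A$ with $\|v g_k(x)v^*-f(x)\|<\varepsilon/4$. By continuity of $f$ and $g_k$, the constant function $v$ satisfies $\|v g_k(y) v^* - f(y)\|<\varepsilon/2$ for all $y$ in an open neighbourhood $U$ of $x$. Using compactness and finiteness of $k\in\{0,\dots,m\}$, we get a finite open cover of $X$ such that on each member $U$, and for each $k$, there is a single element $v_{U,k}\in A$ satisfying this estimate uniformly on $U$.

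\textbf{Step 2 (colouring).} Since $\dim X=m$, we refine this cover to a finite open cover $\mathcal V=\mathcal V_0\sqcup\cdots\sqcup\mathcal V_m$ in which each family $\mathcal V_k$ consists of pairwise disjoint open sets. This is the standard Ostrand/colouring reformulation of covering dimension. Choose a partition of unity $\{h_V\}_{V\in\mathcal V}$ subordinate to $\mathcal V$, and put $h_k=\sum_{V\in\mathcal V_k}h_V$, so that $\sum_k h_k=1$. Each $V\in\mathcal V_k$ lies in some $U$ from Step 1; fix the corresponding $v_{V}:=v_{U,k}$ for colour $k$. Because the sets in $\mathcal V_k$ are disjoint, the function $w_k:=\sum_{V\in\mathcal V_k}h_V^{1/2}v_V$ lies in $C(X,A)$, and on each $V\in\mathcal V_k$ it equals $h_V^{1/2}v_V$.

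\textbf{Step 3 (assembly).} Let $w$ be the row $(w_0,\dots,w_m)$ in $M_{1,m+1}(C(X,A))$. Then $w(g_0\oplus\cdots\oplus g_m)w^*=\sum_k w_k g_k w_k^*$. For $y\in X$,
\[
w_k(y)g_k(y)w_k(y)^* = h_k(y)\,v_{V}g_k(y)v_{V}^*
\]
whenever $y\in V\in\mathcal V_k$, and it is $0$ otherwise. In either case it is within $h_k(y)\varepsilon/2$ of $h_k(y)f(y)$. Summing over $k$ gives
\[
\Bigl\|\sum_k w_kg_kw_k^*-f\Bigr\|\le\varepsilon/2<\varepsilon,
\]
and Lemma~\ref{L:CzBasic}(\ref{L:Cuntzcutcutdown}) then yields $(f-\varepsilon)_+\precsim \sum_k w_kg_kw_k^*\precsim g_0\oplus\cdots\oplus g_m$. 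The ``in particular'' statement is the case $g_0=\cdots=g_m=g$.

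\textbf{Main obstacle.} The main obstacle is Step 2: we need the dimension-theoretic colouring of a cover into $m+1$ disjoint families that refines an arbitrary finite open cover. This holds for compact metric spaces of covering dimension $m$ by Ostrand's theorem, or by passing to a nerve refinement of order $m+1$ and colouring through barycentric subdivision. Once the colouring is available, the disjointness inside each colour class is exactly what makes the patched witnesses $w_k$ continuous and keeps the cross terms from appearing.
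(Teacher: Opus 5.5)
Your proof is correct and is essentially the paper's argument: local witnesses, an $m$-decomposable refinement, patching with $h^{1/2}$ inside each colour class, and disjointness to eliminate cross terms. The only difference is cosmetic. You combine the colour classes with a row $w$ so that $w(g_0\oplus\cdots\oplus g_m)w^*$ approximates $f$ itself, which avoids the splitting via Lemma~\ref{L:CzBasic}(\ref{L:CuntzDirectSum}) and the $\varepsilon/(m+1)$ bookkeeping the paper needs for its diagonal $u$. (Also, disjointness is what kills the cross terms; it is not what makes $w_k$ continuous.)
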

\begin{proof}
    Let $\varepsilon>0$. By definition, for $0\leq k\leq m$, there exists $v_x^{(k)}\in A$ such that 
\begin{equation*}
\begin{split}
   \|v_x^{(k)}\, g_k(x)\,(v_x^{(k)})^*-f(x)\| &< \frac{\varepsilon}{m+1}.
   \end{split}
   \end{equation*}
   By continuity of the maps $\zeta_x^{(k)}\colon X\rightarrow [0,\infty)$, defined by
   \begin{equation*}
       \begin{split}
           \zeta_x^{(k)}(z) &=\|v_x^{(k)}\,g_k(z)\,(v_x^{(k)})^*-f(z)\|,
       \end{split}
   \end{equation*}
   we find an open neighborhood of $x$, say $N_x$, such that  for all $ z\in N_x$ and for all $k\in\{0,1,\ldots,m\}$
   \begin{equation*}
       \begin{split}
           \|v_x^{(k)} \, g_k(z)\, (v_x^{(k)})^*-f(z)\|<\frac{\varepsilon}{m+1}.
       \end{split}
   \end{equation*}
   Since $X$ is compact there exist $x_1,x_2,\ldots,x_l\in X$ such that $X=\bigcup_{j=1}^l{N_{x_j}}$. Using the fact that $\dim(X)=m$,  there exists an
$m$-decomposable finite open refinement of $\{N_{x_j}: j = 1,2,\ldots,l\}$ of the form
   
  \[\{{U_{0,j}}\}_{j\in J_0}\cup \{{U_{1,j}}\}_{j\in J_1} \cup \ldots \cup \{{U_{m,j}}\}_{j\in J_m}
  \]
  such that 
\[
U_{k,j} \cap U_{k,j'} = \emptyset, \quad\text{for}\,\, j\neq j'\quad\text{and}\quad k=0,1,\ldots,m. 
\]
Choose a partition of unity subordinate to the above cover, say $h_{k,j} \colon X \rightarrow [0,1]$, such that 
\[
\text{supp}(h_{k,j}) \subseteq U_{k,j},\quad\text{and}\quad \sum_{k=0}^m \sum_{j\in J_{k}} h_{k,j}=1.
\]

For each $k \in \{0,1,\ldots,m\}$, and $j \in J_k$, choose $\ell(k,j) \in \{1,2,\ldots,l\}$ such that $U_{k,j} \subseteq N_{x_{\ell(k,j)}}$. Define
\begin{equation*}
\begin{split}
g &:= \text{diag}\left(g_0, g_1, \dots, g_m\right),\quad\text{and}\\[2mm]
u &:=\text{diag}\left(\sum_{j\in J_0} h_{0,j}^{1/2}\, v_{x_{\ell(0,j)}}^{(0)}, \sum_{j\in J_1} h_{1,j}^{1/2}\, v_{x_{\ell(1,j)}}^{(1)}, \ldots, \sum_{j\in J_m} h_{m,j}^{1/2}\, v_{x_{\ell(m,j)}}^{(m)}\right).
\end{split}
\end{equation*}
Let $z\in X$, and $\Gamma_z=\{(s,t)\in \{0,1,\ldots,m\}\times \left( \bigcup_{k=0}^m J_k \right) \mid z \in U_{s,t}\}$. Then
\begin{equation}
    \begin{split}
        u g u^*= \text{diag}\left(\sum_{j\in J_0} h_{0,j}\,v_{x_{\ell(0,j)}}^{(0)}\, g_0\, \left(v_{x_{\ell(0,j)}}^{(0)}\right)^*, \ldots, \sum_{j\in J_m} h_{m,j}\,v_{x_{\ell(m,j)}}^{(m)}\, g_m\, \left(v_{x_{\ell(m,j)}}^{(m)}\right)^* \right).
    \end{split}
\end{equation}

Furthermore, 
\begin{equation}\label{eqn_cuntzeq}
       \begin{split}
          &\left\| \Bigg\{ \text{diag} \left(\sum_{j\in J_0} h_{0,j} f, \sum_{j\in J_1} h_{1,j} f,\ldots, \sum_{j\in J_m} h_{m,j} f \right)- u g u^* \Bigg\} (z) \right\|\\
          &= \left\| \Bigg\{ \text{diag} \left( \sum_{j\in J_0} h_{0,j} \left( f- v_{x_{\ell(0,j)}}^{(0)}\, g_0\, (v_{x_{\ell(0,j)}}^{(0)})^* \right),\ldots, \sum_{j\in J_m} h_{m,j} \left(f-v_{x_{\ell(m,j)}}^{(m)}\, g_m\, (v_{x_{\ell(m,j)}}^{(m)})^* \right) \right)\Bigg\}(z)\right\|\\
          &\leq \sum_{k=0}^m \sum_{j\in J_k} h_{k,j}(z) \left\|\left(f(z)- v_{x_{\ell(k,j)}}^{(k)}\, g_k(z)\, (v_{x_{\ell(k,j)}}^{(k)})^* \right)\right\| \\
          & < \sum_{(k,j)\in\Gamma_z} h_{k,j}(z)\frac{\varepsilon}{m+1}\\
          &\leq \frac{\varepsilon}{m+1}.
       \end{split}
   \end{equation}
   
   Thus, using Lemma \ref{L:CzBasic}~(\ref{L:CuntzDirectSum}) at the second step, and (\ref{eqn_cuntzeq}) and Lemma \ref{L:CzBasic}~(\ref{L:Cuntzcutcutdown}) at the fourth step, we see that,
 \begin{equation*}
       \begin{split}
         (f-\varepsilon)_+ &= \left(\sum_{j\in J_0} h_{0,j} f+ \sum_{j\in J_1} h_{1,j} f+\ldots+ \sum_{j\in J_m}h_{m,j} f -\varepsilon\right)_+\\
         &\precsim  \text{diag}\left(\left(\sum_{j\in J_0} h_{0,j}f-\frac{\varepsilon}{m+1}\right)_+, \left(\sum_{j\in J_1} h_{1,j}f-\frac{\varepsilon}{m+1}\right)_+,\ldots, \left(\sum_{j\in J_m}h_{m,j}f-\frac{\varepsilon}{m+1}\right)_+\right)\\
         &= \left(\text{diag}\left(\sum_{j\in J_0} h_{0,j} f, \sum_{j\in J_2} h_{1,j} f,\ldots, \sum_{j\in J_m}h_{m,j} f\right)-\frac{\varepsilon}{m+1}\right)_+\\
         &\precsim u g u^*\\
         &\precsim g.
         \end{split}
   \end{equation*}

It follows that for every $\varepsilon > 0$, we have 
$(f - \varepsilon)_+ \precsim g.$ 
Thus, we obtain
\[
f \precsim g_0 \oplus g_1 \oplus \ldots \oplus g_m,
\]
as required.

\end{proof}

\begin{prop}\label{comp_c(x,A)}
    Let $X$ be a compact metric space of covering dimension $m$, and let $A$ be a \ca{} with strict comparison. Then $C(X, A)$ has $m$-comparison.
\end{prop}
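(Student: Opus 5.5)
Let $x=[f]$ and $y_0,\dots,y_m$ with $y_k=[g_k]$ be elements of $\Cu(C(X,A))$ such that $x\leq_s y_k$ for each $k$. The plan is to reduce the statement to pointwise information and then invoke Lemma \ref{lem:CuntzSubC(X,A)}.

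\textbf{Setting up.} Identify $C(X,A)\otimes\k\cong C(X,A\otimes\k)$, so that we may take $f,g_k\in C(X,A\otimes\k)_+$. Since $A\otimes\k$ again has strict comparison and $X$ is still of dimension $m$, Lemma \ref{lem:CuntzSubC(X,A)} applies verbatim with $A\otimes\k$ in place of $A$. It therefore suffices to show that $f(x)\precsim g_k(x)$ in $A\otimes\k$ for every point $x\in X$ and every $k$; the lemma then gives $f\precsim g_0\oplus\cdots\oplus g_m$, i.e.\ $[f]\leq\sum_k [g_k]$.

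\textbf{Pointwise comparison.} Fix $x\in X$ and $k$. By hypothesis there is $n_k$ with $(n_k+1)[f]\leq n_k[g_k]$ in $\Cu(C(X,A))$; that is, $f\otimes 1_{n_k+1}\precsim g_k\otimes 1_{n_k}$ (in $C(X,A\otimes\k)$, using $M_n(A\otimes\k)\cong A\otimes\k$). Evaluation at $x$ is a $^*$-homomorphism $C(X,A\otimes\k)\to A\otimes\k$, and $^*$-homomorphisms preserve Cuntz subequivalence: if $v_j(g_k\otimes 1_{n_k})v_j^*\to f\otimes 1_{n_k+1}$, then applying $\mathrm{ev}_x$ gives the same relation at $x$. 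Hence $(n_k+1)[f(x)]\leq n_k[g_k(x)]$ in $\Cu(A)$. By Remark \ref{rem:StCompAlmUnperf}, strict comparison of $A$ means exactly that $\Cu(A)$ is almost unperforated, i.e.\ has $0$-comparison. Applying this with the single element $y_0=[g_k(x)]$ yields $[f(x)]\leq[g_k(x)]$, that is, $f(x)\precsim g_k(x)$.

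\textbf{Conclusion and main obstacle.} Combining the two steps with Lemma \ref{lem:CuntzSubC(X,A)} gives $x\leq y_0+\cdots+y_m$, which is $m$-comparison. The only point needing care is the passage to $A\otimes\k$. One must check that Lemma \ref{lem:CuntzSubC(X,A)} holds for the non-unital algebra $A\otimes\k$; its proof never uses a unit. One must also check that strict comparison, as defined through almost unperforation of $\Cu(A)=\Cu(A\otimes\k)$, is exactly what is used pointwise. Both checks are essentially bookkeeping, so I expect no serious obstacle beyond these identifications.
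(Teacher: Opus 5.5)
Your proposal is correct and is essentially the paper's proof: the paper likewise reduces to stable $A$, evaluates the relations $[f]\leq_s[g_k]$ pointwise to get $[f(t)]\leq_s[g_k(t)]$ in $\Cu(A)$, uses almost unperforation of $\Cu(A)$ to conclude $f(t)\precsim g_k(t)$ for every $t\in X$, and finishes with Lemma \ref{lem:CuntzSubC(X,A)}. As a purely cosmetic point, you use the letter $x$ both for the class $[f]$ and for points of $X$, which you should separate.
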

\begin{proof}
We may assume without loss of generality that $A$ is stable. Let $f,g_0,g_1,\ldots,g_m\in C(X,A)_+$. Assume that $[f]\leq_s [g_k]$ in $\Cu (C(X,A))$ for each $k=0,1,\ldots,m$. We need to show that \[
[f] \leq \sum_{k=0}^m [g_k]\quad\text{in}\quad\Cu(C(X,A)).
\]

To this end, see that for any $x\in X$, the evaluation map ${\rm ev}_x\colon C(X,A)\to A$ given by $f\mapsto f(x)$ is a $^*$-homomorphism. Thus, it preserves Cuntz subequivalence (and, in particular, the relation $\leq_s$ in the Cuntz semigroup). We have
\[
    [f(x)]\leq_s [g_k(x)]
\]
in $\Cu (A)$ for each $x\in X$ and $k=0,1,\ldots,m$.
Using $0$-comparison in $\Cu (A)$, we get $f(x)\precsim g_k(x)$ for each $x\in X$ and $k=0,1,\ldots,m$. Applying Lemma~\ref{lem:CuntzSubC(X,A)}, it follows that
\[
f \precsim g_0 \oplus g_1 \oplus \cdots \oplus g_m.
\] 
Consequently, in the Cuntz semigroup $\Cu(C(X,A))$,
\[
[f] \leq \sum_{k=0}^m [g_k],
\] 
as desired.
\end{proof}

\section{\texorpdfstring{Divisibility in $\Cu(C(X,A))$}{Divisibility in Cu(C(X,A))}}\label{sec:DivCuA}

Having established $m$-comparison for $C(X, A)$ in Proposition \ref{comp_c(x,A)}, we now turn our attention to divisibility. As outlined in the introduction, our strategy to prove pureness (and, consequently, strict comparison) of $C(X, A)$ relies on establishing a suitable divisibility property for its Cuntz semigroup that will give us access to dimension reduction arguments. In Corollary \ref{robert_analogy_2}, we show that if $X$ is finite-dimensional and $A$ is almost divisible, then $\Cu(C(X, A))$ inherits a specific form of divisibility that is sufficient for our purposes (namely, that of Theorem \ref{prp:Gen_DimRed}).

The following definition is from \cite[Paragraph~2.3]{RobTik17NucDimNonSimple}, which slightly differs from the original \cite[Definition~2.5]{Win12NuclDimZstable} but behaves better for non-simple \ca{s}. 

\begin{defn}\label{def_almost_divisibility}
    Let $A$ be a \ca{} and $n\in\N$. We say that $A$ (and its Cuntz semigroup $\Cu (A)$) is \emph{$n$-almost divisible} if for any pair $x',x\in\mathrm{Cu}(A)$ such that $x'\ll x$ and any $N\in \N$, there exists $y\in \mathrm{Cu}(A)$ such that 
    \[
     N y\leq x\quad{\mbox{and}}\quad x'\leq (N+1)(n+1)y.
    \]

    $A$ is said to be \emph{almost divisible} if it is $0$-almost divisible.
\end{defn}

\begin{prop}\label{robert_analogy}
 Let $X$ be a compact metric space of covering dimension $m \in\N$, and let $A$ be an almost divisible \ca{}. Then, for any $[f] \in \Cu(C(X,A))$, $\varepsilon > 0$, and $N \in \mathbb{N}$, there exists $[a]\in \mathrm{Cu}(C(X,A))$ such that 
\[
[(f - \varepsilon)_+]\leq (N+1)[a]
\quad\text{and}\quad 
N[a]\leq (m+1)^2 [f].
\]
\end{prop}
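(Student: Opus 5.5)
We localize in $X$, using almost divisibility of $A$ at finitely many points, glue with an $m$-decomposable cover as in Lemma~\ref{lem:CuntzSubC(X,A)}, and absorb the gluing cost into the factor $(m+1)^2$. We may assume $A$ is stable, so that $C(X,A)$ is stable and every class in $\Cu(C(X,A))$ is represented by an element of $C(X,A)_+$. Fix $f$, $\varepsilon$ and $N$.

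\textbf{Step 1 (pointwise division and spreading to neighbourhoods).} For each $x\in X$ we have $[(f(x)-\varepsilon/2)_+]\ll[f(x)]$ in $\Cu(A)$. Almost divisibility gives $b_x\in A_+$ with
\[
N[b_x]\leq [f(x)] \quad\text{and}\quad [(f(x)-\varepsilon/2)_+]\leq (N+1)[b_x].
\]
Before extending $b_x$ off the point $x$, we shrink it slightly: replace $b_x$ by $(b_x-\delta)_+$ for a suitable $\delta>0$. This keeps $N[(b_x-\delta)_+]\leq [f(x)]$. By compactness (the defining property of $\ll$) it also gives $[(f(x)-3\varepsilon/4)_+]\leq (N+1)[(b_x-\delta)_+]$.

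Let $\hat b_x\in C(X,A)_+$ be the constant function $b_x$. On a small neighbourhood $N_x$ of $x$, continuity of $f$ together with Lemma~\ref{L:CzBasic}~(\ref{L:Cuntzcutcutdown}) gives the two pointwise relations we need:
\[
(f(z)-\varepsilon)_+\precsim (f(x)-3\varepsilon/4)_+ \quad\text{and}\quad N\,(b_x-\delta)_+\precsim (f(x)-\eta)_+\precsim f(z).
\]
For the second relation, the first $\precsim$ uses Lemma~\ref{L:CzBasic}~(\ref{L:CutDowninHer}) with some small $\eta$, and the second holds once $\|f(z)-f(x)\|<\eta$.

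\textbf{Step 2 (gluing).} Refine $\{N_x\}$ to an $m$-decomposable cover $\{U_{k,j}\}$, as in the proof of Lemma~\ref{lem:CuntzSubC(X,A)}. Take a partition of unity $h_{k,j}$ subordinate to it. For each colour $k$, set
\[
a_k:=\sum_{j\in J_k} h_{k,j}\,\hat b_{x_{\ell(k,j)}},
\]
which is a disjoint sum for fixed $k$, and put $a:=a_0\oplus\cdots\oplus a_m$.

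\textbf{Step 3 (the upper bound $N[a]\leq (m+1)^2[f]$).} For each $k$ and each $z$, $N a_k(z)$ is either $0$ or Cuntz below $f(z)$, by Step 1. Lemma~\ref{lem:CuntzSubC(X,A)} (in its ``in particular'' form) then gives $N[a_k]\leq (m+1)[f]$. Summing over $k=0,\ldots,m$ gives $N[a]\leq (m+1)^2[f]$.

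\textbf{Step 4 (the lower bound $[(f-\varepsilon)_+]\leq (N+1)[a]$).} As in the proof of Lemma~\ref{lem:CuntzSubC(X,A)}, first write
\[
(f-\varepsilon)_+\precsim \bigoplus_k\Bigl(\sum_{j} h_{k,j}f-\tfrac{\varepsilon}{m+1}\Bigr)_+ .
\]
It then suffices to show, for each $k$, that $\bigl(\sum_j h_{k,j}f-\frac{\varepsilon}{m+1}\bigr)_+\precsim (N+1)\,a_k$.

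The $U_{k,j}$ are disjoint for fixed $k$, so this can be checked one piece $j$ at a time. On the piece $j$, with $x=x_{\ell(k,j)}$, the left side is $(h_{k,j}f-\frac{\varepsilon}{m+1})_+$ and the right side is $(N+1)\,h_{k,j}\hat b_x$. Where $h_{k,j}$ is bounded away from zero, Step 1 applies after rescaling. Where $h_{k,j}$ is small, the cut-down makes the left side vanish, provided $\varepsilon$ is adjusted (say, run Step 1 with $\varepsilon/(m+1)$). The local subequivalences are then assembled into a global one using $\sup h_{k,j}$-type cutoffs. Concretely: the constant function gives $(f-\varepsilon')_+|_{U}\precsim (N+1)\hat b_x|_U$ through a fixed matrix-valued implementing element. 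Multiplying by $h_{k,j}^{1/2}$ and summing over disjoint $j$ glues these into an element of $C(X,A)$.

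\textbf{Main obstacle.} The hard part is Step 4. One must control the threshold interplay between $h_{k,j}$ and the cut-down $\varepsilon$. One must also make the local subequivalence uniform on all of $U_{k,j}$ rather than only at the point $x$. This is where the choice to shrink $b_x$ to $(b_x-\delta)_+$ and to use $3\varepsilon/4$ intermediate thresholds is essential. If that fails, I would instead apply Lemma~\ref{lem:CuntzSubC(X,A)} to the $N+1$-fold amplification of the pieces. This costs an extra factor of $m+1$, which seems acceptable only for the upper bound, not the lower one. The cleaner route is therefore the direct local gluing above.
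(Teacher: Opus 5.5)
Your overall architecture is the same as the paper's: sample points of an $m$-decomposable refinement, division at the sample points, gluing by the partition of unity, and colour-by-colour estimates. Steps~1--3 are fine. The upper bound $N[a]\le (m+1)^2[f]$ is obtained exactly as in the paper, via pointwise comparison and one application of Lemma~\ref{lem:CuntzSubC(X,A)} per colour.

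The gap is Step~4, the only non-trivial part of the lower bound, which you do not actually prove. Step~1 only supplies \emph{pointwise} relations $(f(z)-\varepsilon)_+\precsim (f(x)-3\varepsilon/4)_+$. Pointwise relations cannot be glued without the $(m+1)$-loss of Lemma~\ref{lem:CuntzSubC(X,A)}. Nor does any constant element implement a subequivalence of the non-constant $(f-\varepsilon')_+|_U$ into the constant $(N+1)\hat b_x|_U$, so you only get approximate implementation with some error $\gamma$. Multiplying by $h_{k,j}^{1/2}$ then leaves you comparing cut-downs whose thresholds scale differently in $h_{k,j}$: roughly $\varepsilon'/h_{k,j}$ on the left versus $\lambda+\gamma/h_{k,j}^{2}$ on the right. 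This is exactly the ``threshold interplay'' you leave open. Shrinking $b_x$ to $(b_x-\delta)_+$ does not address it; that device serves the upper bound, making $N[b_x]\le[f(z)]$ persist near $x$.

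The missing observation is to replace $f$ by its sampled values \emph{in norm} before cutting down. Then use the scalar inequality $(ht-\eta)_+\le h(t-\eta)_+$ for $0\le h\le 1$. Write $\varepsilon'=\varepsilon/(m+1)$ and $x_j=x_{\ell(k,j)}$, and regard $f(x_j)$ as a constant function. Suppose $\|f(z)-f(x_j)\|<\varepsilon'/4$ on $U_{k,j}$ and $(f(x_j)-\tfrac{3\varepsilon'}{4})_+\precsim (N+1)b_{x_j}$ in $A$; this is your Step~1 run with $\varepsilon'$. Then
\[
\Bigl[\Bigl(\sum_j h_{k,j}f-\varepsilon'\Bigr)_+\Bigr]
\le\Bigl[\Bigl(\sum_j h_{k,j}f(x_j)-\tfrac{3\varepsilon'}{4}\Bigr)_+\Bigr]
\le\Bigl[\sum_j h_{k,j}\bigl(f(x_j)-\tfrac{3\varepsilon'}{4}\bigr)_+\Bigr]
\le (N+1)[a_k].
\]
The three steps are justified as follows:
\begin{enumerate}
\item The first is Lemma~\ref{L:CzBasic}~(\ref{L:Cuntzcutcutdown}); at each point at most one $h_{k,j}$ is nonzero, so the norm estimate holds globally.
\item The second follows from orthogonality over $j$ together with the scalar inequality.
\item The third holds because $(c_j)_j\mapsto\sum_j h_{k,j}c_j$ is a cpc order zero map $\bigoplus_j A\to C(X,A)$, hence preserves Cuntz subequivalence. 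Concretely, implement it by $\sum_j\chi_j v_j$, where $v_j$ approximately implements the relation in $A$ and $\chi_j\in C(X)$ equals $1$ on $\mathrm{supp}(h_{k,j})$ and vanishes off $U_{k,j}$.
\end{enumerate}

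This is precisely the paper's route. The paper replaces $f$ by $\sum_k(\varphi_k\circ\psi_k)(f)=\sum_{k,j}h_{k,j}f(x_{k,j})$ at the outset and divides $\psi_k(f)$ in $\bigoplus_{j\in J_k}A$. It then pushes the result forward along $\varphi_k$ and uses $((\varphi_k\circ\psi_k)(f)-\eta)_+\le\varphi_k((\psi_k(f)-\eta)_+)$. No $h_{k,j}$-dependent threshold bookkeeping is needed, and your fallback via Lemma~\ref{lem:CuntzSubC(X,A)} becomes unnecessary.
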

\begin{proof}
We may assume without loss of generality that $A$ is stable and, consequently, that $f\in C(X,A)_+$. Fix $N\in\N$. By compactness of $X$, we can find a finite open cover $\mathcal{V}$ of $X$ such that if $V \in \mathcal{V}$ and $x, y \in V$, then 

    \begin{equation}\label{norm_diff}
  \|f(x)-f(y)\|< \frac{\varepsilon}{8(m+1)}.
    \end{equation}
    Using that the covering dimension of $X$ is $m$, we can find a finite open refinement $\mathcal{U}$ of
order $m + 1$. Thus,  we can decompose it in the following way: $\mathcal{U}=\{{U_{0,j}}\}_{j\in J_0}\cup \{{U_{1,j}}\}_{j\in J_1} \cup \ldots \cup \{{U_{m,j}}\}_{j\in J_m}$ such that 
\[
U_{k,j}\cap U_{k,j'}=\emptyset, \quad\mbox{for}\,\, j\neq j'\,\,\text{and}\,\, k=0,1,\ldots,m. 
\]

Fix points $x_{k,j}\in U_{k,j}$ and for $k=0,1,\ldots,m$ define maps $\psi_k: C(X,A)\to \bigoplus_{j\in J_k} A$  as
\[
\psi_k(g)=\left(g(x_{k,j})\right)_{j\in J_k}.
\]
Choose a partition of unity subordinate to the cover $\mathcal{U}$, say $h_{k,j}:X\rightarrow [0,1]$, such that 
\[
\text{supp}(h_{k,j})\subseteq U_{k,j},\quad{\mbox{and}}\quad\sum_{k=0}^m\sum_{j\in J_{k}} h_{k,j}=1,
\]
and define maps $\varphi_k: \bigoplus_{j\in J_k} A\to C(X, A)$ as 
\[
\varphi_k\left(\left(a_{k,j}\right)_{j\in J_k}\right)= \sum_{j\in J_k} a_{k,j} h_{k,j}.
\]
Then, for $k=0,1,\ldots,m$, $\psi_k$ and $\varphi_{k}$ are cpc order zero maps such that
\begin{equation}\label{eqn_norm}
 \left\|f- \sum_{k=0}^{m} (\varphi_k\circ \psi_k)(f)\right\| \leq \frac{\varepsilon}{8(m+1)}<\frac{\varepsilon}{2},
\end{equation}
and by Part~(\ref{L:Cuntzcutcutdown}) of Lemma~\ref{L:CzBasic}, 
\begin{equation}\label{Phillips_large}
(f-\varepsilon)_+= \left(f-\frac{\varepsilon}{2}-\frac{\varepsilon}{2}\right)_+\precsim \left(\sum_{k=0}^m(\varphi_k\circ\psi_k)(f)-\frac{\varepsilon}{2}\right)_+. 
\end{equation}
    Since $\mathrm{Cu}(A)$ is almost divisible, by Definition~\ref{def_almost_divisibility} there exist $a_0, a_1\ldots,a_m$, where $a_k\in \left(\bigoplus_{j\in J_k} A\right)_+$, such that for $k=0,1,\ldots,m$,
     \[
N [a_k]\leq \left[\left(\psi_k(f)-\frac{\varepsilon}{8(m+1)}\right)_+\right]\quad\mbox{and}\quad\left[\left(\psi_k(f)-\frac{\varepsilon}{4(m+1)}\right)_+\right]\leq (N+1)[a_k].
     \]
Hence, for $k=0,1,\ldots,m$,
\begin{equation}\label{cuntz_inequality1}
\begin{split}
N [\varphi_k(a_k)]\leq & \left[\varphi_k\left(\left(\psi_k(f)-\frac{\varepsilon}{8(m+1)}\right)_+\right)\right],
\end{split}
\end{equation}
and
\begin{equation}\label{cuntz_inequality2}
    \begin{split}
\left[\varphi_k\left(\left(\psi_k(f)-\frac{\varepsilon}{4(m+1)}\right)_+\right)\right]&\leq  (N+1)\,[\varphi_k(a_k)].
\end{split}
      \end{equation}
Furthermore, since for a fixed $k$, $\{U_{k,j}\}_{j\in J_k}$ are disjoint, it follows from (\ref{norm_diff}) that for each $x\in X$
      \[
\Bigg\{\sum_{j\in J_k} \left(f(x_{k,j})-\frac{\varepsilon}{8(m+1)}\right)_+ h_{k,j}\Bigg\}(x) \precsim f(x).
      \]
Using Lemma~\ref{lem:CuntzSubC(X,A)} at the second step, we see that 
  \[
 \varphi_k\left(\left(\psi_k(f)-\frac{\varepsilon}{8(m+1)}\right)_+\right)=\sum_{j\in J_k} \left(f(x_{k,j})-\frac{\varepsilon}{8(m+1)}\right)_+ h_{k,j} \precsim  f\otimes 1_{m+1},
  \]
     and by (\ref{cuntz_inequality1}), we get

\begin{equation}\label{homomorphism}
    \begin{split}
        N [\varphi_k(a_k)]\leq & \left[\varphi_k\left(\left(\psi_k(f)-\frac{\epsilon}{8(m+1)}\right)_+\right)\right] \leq (m+1)[f].
    \end{split}
\end{equation}
  Furthermore,  for each $x\in X$ see that
  \[
 \left((\varphi_k\circ \psi_k)(f)-\frac{\varepsilon}{4(m+1)}\right)_+ (x) \leq \varphi_k\left(\left(\psi_k(f)-\frac{\varepsilon}{4(m+1)}\right)_+\right) (x),
  \]
  hence
  \[
   \left((\varphi_k\circ \psi_k)(f)-\frac{\varepsilon}{4(m+1)}\right)_+  \leq \varphi_k\left(\left(\psi_k(f)-\frac{\varepsilon}{4(m+1)}\right)_+\right) \quad\text{in}\quad C(X,A).
  \]
 Thus, we further see that 
 \[
 \left[\left((\varphi_k\circ \psi_k)(f)-\frac{\varepsilon}{4(m+1)}\right)_+ \right]\leq \left[\varphi_k\left(\left(\psi_k(f)-\frac{\varepsilon}{4(m+1)}\right)_+\right)\right],
 \]
 and so by (\ref{cuntz_inequality2}), it follows that

 \begin{equation}\label{ineq_cpc}
     \begin{split}
       \left[\left((\varphi_k \circ\psi_k)(f)-\frac{\varepsilon}{4(m+1)}\right)_+ \right]\leq  (N+1)\,[\varphi_k(a_k)].
     \end{split}
 \end{equation}
 
   Finally, define $a:=\text{diag}\left( \varphi_0(a_0), \varphi_1(a_1), \ldots, \varphi_m(a_m)\right) \in M_{m+1}(C(X,A))$. Then, by (\ref{homomorphism}),
     \begin{equation}\label{first_ine}
         N[a]= \sum_{k=0}^m N [
         \varphi_k(a_k)]\leq (m+1)^2[f].
     \end{equation}
     Lastly, by using (\ref{Phillips_large}) at the first step, Part~(\ref{L:CuntzDirectSum}) of Lemma~\ref{L:CzBasic} at the second step and (\ref{ineq_cpc}) at the fifth step, we see that
     \begin{equation}\label{second_ine}
         \begin{split}
             [(f-\varepsilon)_+]\leq \left[\left(\sum_{k=0}^m(\varphi_k\circ\psi_k)(f)-\frac{\varepsilon}{2}\right)_+\right]
        &\leq\left[\sum_{k=0}^m\left((\varphi_k\circ\psi_k)(f)-\frac{\varepsilon}{2(m+1)}\right)_+\right]\\
             & \leq \sum_{k=0}^m \left[ \left((\varphi_k\circ\psi_k)(f)-\frac{\varepsilon}{2(m+1)}\right)_+\right]\\
             & \leq \sum_{k=0}^m \left[ \left((\varphi_k\circ\psi_k)(f)-\frac{\varepsilon}{4(m+1)}\right)_+\right]\\
             &\leq (N+1) \sum_{k=0}^m[\varphi_k(a_k)]\\
             &= (N+1)[a].
         \end{split}
     \end{equation}

     Hence, from (\ref{first_ine}) and (\ref{second_ine}) we get the desired conclusion.
\end{proof}

\begin{cor}\label{robert_analogy_2}
    Let $X$ be a finite-dimensional compact metric space, and let $A$ be an almost divisible \ca{}. Then, there exists $M\in\N$ such that, for every $N\geq 1$ and every pair $x'\ll x$ in $\Cu (C(X,A))$, there exists $y\in\Cu (C(X,A))$ such that
    \[
        x'\ll Ny\ll Mx.
    \]
\end{cor}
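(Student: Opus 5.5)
We derive this from Proposition~\ref{robert_analogy}, taking $M:=(m+1)^2+1$ where $m=\dim X$. The only work is to upgrade the inequalities there to the compact containments $\ll$ in the statement. We use the standard facts about $\Cu$-semigroups that addition preserves $\ll$, and that $x\ll y\leq z$ or $x\leq y\ll z$ implies $x\ll z$.

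Fix $N\geq 1$ and a pair $x'\ll x$, and write $x=[f]$. By the definition of $\ll$, there is $\varepsilon>0$ with $x'\leq [(f-2\varepsilon)_+]$. Since $(f-2\varepsilon)_+\precsim\bigl((f-\varepsilon)_+-\varepsilon/2\bigr)_+$ by Lemma~\ref{L:CzBasic}~(\ref{L:CzBasic:MinIter}), we get
\[
x'\leq [(f-2\varepsilon)_+]\ll[(f-\varepsilon)_+].
\]

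We now apply Proposition~\ref{robert_analogy} to $f$, $\varepsilon$ and $N$ (for $N=1$ with $N+1=2$ the argument is the same). This gives $[a]$ with
\[
[(f-\varepsilon)_+]\leq (N+1)[a]\quad\text{and}\quad N[a]\leq (m+1)^2[f].
\]

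The inequality $x'\ll(N+1)[a]$ does not have the required form, since we need a multiple $Ny$. We fix this by applying Proposition~\ref{robert_analogy} with $N$ replaced by $N-1$ (for $N\geq 2$). This produces $[a]$ with
\[
[(f-\varepsilon)_+]\leq N[a]\quad\text{and}\quad (N-1)[a]\leq (m+1)^2[f].
\]
The first inequality gives $x'\ll N[a]$. For the upper bound, $N[a]=(N-1)[a]+[a]$, and $[a]\leq (N-1)[a]$ when $N\geq 2$. Hence
\[
N[a]\leq 2(m+1)^2[f].
\]

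The case $N=1$ is handled directly by taking $y=x$, since $x'\ll x\ll$ is not needed there: we only need $x\leq Mx$ strictly compact. Instead, we choose $y=[(f-\varepsilon)_+]$, which gives $x'\ll y\ll x\leq Mx$.

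It remains to make the upper inequality compact. Replace $f$ by $(f-\delta)_+$ for small $\delta>0$ with $x'\ll[(f-\delta)_+]$, and run the above argument with $f_\delta=(f-\delta)_+$. This yields
\[
Ny\leq 2(m+1)^2[f_\delta]\ll 2(m+1)^2[f]=2(m+1)^2x.
\]
Setting $M:=2(m+1)^2$, which depends only on $m$ and works for every $N\geq 1$, gives $x'\ll Ny\ll Mx$.

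The main obstacle is bookkeeping: the shift $N\mapsto N-1$ to convert $(N+1)$ into $N$, and the nested cut-downs needed for both containments to be compact. No new estimates beyond Proposition~\ref{robert_analogy} are required.
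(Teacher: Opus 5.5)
Your final argument is correct and is essentially the paper's proof: apply Proposition~\ref{robert_analogy} with $N-1$, use $N[a]\leq 2(N-1)[a]$ to reach $M=2(m+1)^2$, treat $N=1$ separately, and pass to an intermediate element (your $[(f-\delta)_+]$, the paper's $w$ with $x'\ll w\ll x$) to make the upper containment compact. Do clean up the write-up: drop the initially announced $M=(m+1)^2+1$ (never justified), the abandoned first application of the proposition with $N$, and the garbled opening sentence of the $N=1$ case.
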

\begin{proof}
    Set $M=2(\dim (X)+1)^2$ and let $N\in\N$. If $N=1$, any $y\in\Cu (A)$ such that $x'\ll y\ll x$ satisfies the required condition. Thus, we may assume $N>1$.
    As in the previous proof, we may also assume that $A$ is stable. Find $w\in \Cu (C(X,A))$ such that $x'\ll w\ll x$, and let $w=[f]$ with $f\in C(X,A)_+$. By the definition of compact containment, there exists $\varepsilon>0$ such that $x'\ll [(f-\varepsilon)_+]$.  Now apply Proposition \ref{robert_analogy} to $f$, $\varepsilon$ and $N-1$ to find $y\in \Cu (C(X,A))$ such that 
    \[
        [(f-\varepsilon)_+]\leq Ny\quad\text{and}\quad (N-1)y\leq (\dim(X)+1)^2[f].
    \]

    We have
    \[
        x'\ll [(f-\varepsilon)_+]\leq Ny\leq 2(N-1)y\leq Mw\ll Mx,
    \]
    as desired.
\end{proof}

\section{Dimension reduction}

In this section, we combine the results of the previous sections to establish Theorem \ref{prp:MainThm1}. We first prove Theorem \ref{prp:Gen_DimRed}, an abstract dimension reduction result showing that any \ca{} satisfying $m$-comparison and the divisibility established in Section \ref{sec:DivCuA}, together with the Global Glimm Property (Paragraph \ref{pgr:GGP}), is necessarily pure. We then apply this general framework to the context of $C(X, A)$. In particular, we obtain pureness when: $A$ is simple and pure (Corollary~\ref{Purehomogenous}); or when $A$ is pure and satisfies a residual finiteness condition ---namely, that the Cuntz semigroup of any quotient of $A$ contains no nonzero compact properly infinite elements (Theorem~\ref{prp:MainNonSimp}).

Following \cite[Definition~2.6]{Win12NuclDimZstable}, pure \ca{s} are those that enjoy good comparison and divisibility properties:

\begin{defn}\label{dfn:pure}
    A \ca{} $A$ (and its Cuntz semigroup $\Cu (A)$) is said to be \emph{pure} if $\Cu (A)$ is almost unperforated and almost divisible. 
\end{defn}

To prove Theorem~\ref{prp:MainNonSimp}, we will make use of the following properties, which are satisfied in the Cuntz semigroup of every \ca{} (see \cite[Proposition~5.1.1]{robert2013cone} and \cite[Proposition~2.2]{antoine2021edwards} respectively):

\begin{enumerate}[label=(\thecuax), ref=\thecuax, leftmargin=*]
\setcounter{cuax}{5}
 \stepcounter{cuax}\item  \label{O6}
Given $x'\ll x\leq y+z$, there exist $v$ and $w$ such that $v\ll  x,y$, and $w\ll x,z$ satisfying $x'\ll v +w$.
 \stepcounter{cuax}\item \label{O7}
Given $x_1'\ll x_1\leq w$ and $x_2'\ll x_2\leq w$, there exists $x\in S$ such that
\[
x_1',x_2'\ll x \ll w,x_1+x_2.
\]
\end{enumerate}

The next set of results follow from induction on (\ref{O6}) and (\ref{O7}). Although these are well known among experts, we were not able to locate a proof in the literature. We provide it here for the convenience of the reader:

\begin{lem}\label{prp:RefO6}
    Let $A$ be a \ca{} and let $x', x,y_1,\ldots ,y_n\in \Cu (A)$ satisfy
    \[
    x'\ll x \leq y_1+\ldots +y_n.
    \]
    
    Then, there exists $z_1,\ldots,z_n\in \Cu (A)$ such that,  for each $i=1,2,\ldots n$, $z_i\ll x,y_i$ and  
    \[
    x' \ll z_1 +z_2 + \ldots z_n.
    \]
\end{lem}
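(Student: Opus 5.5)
We argue by induction on $n$, using (\ref{O6}) at each step. For $n=1$ we take any $z_1$ with $x'\ll z_1\ll x$; such an element exists because every element of $\Cu(A)$ is the supremum of a rapidly increasing sequence. Then $z_1\ll x$, and $z_1\ll x\leq y_1$ gives $z_1\ll y_1$. The case $n=2$ is exactly (\ref{O6}) with $y=y_1$ and $z=y_2$.

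For the inductive step, assume the statement holds for $n-1$ summands, and let $x'\ll x\leq y_1+\ldots+y_n$. Apply (\ref{O6}) with $y=y_1+\ldots+y_{n-1}$ and $z=y_n$. This gives $v,w$ with
\[
v\ll x,\ y_1+\ldots+y_{n-1},\qquad w\ll x,\ y_n,\qquad x'\ll v+w.
\]
Set $z_n:=w$. To use the induction hypothesis on $v$, we need a compactly contained element below it. Since $x'\ll v+w$ and $v$ is the supremum of a rapidly increasing sequence $(v_k)_k$, the sums $v_k+w$ increase to $v+w$. Hence $x'\ll v_k+w$ for some $k$; set $v':=v_k\ll v$.

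Now $v'\ll v\leq y_1+\ldots+y_{n-1}$, where the second inequality follows from $v\ll y_1+\ldots+y_{n-1}$. The induction hypothesis then gives $z_1,\ldots,z_{n-1}$ with $z_i\ll v,y_i$ and $v'\ll z_1+\ldots+z_{n-1}$. From $z_i\ll v\ll x$ we get $z_i\ll x$. Finally,
\[
x'\ll v'+w\leq z_1+\ldots+z_{n-1}+z_n,
\]
using that addition is compatible with the order. This gives $x'\ll z_1+\ldots+z_n$.

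The only subtle point is the approximation step: we replace $v$ by some $v'\ll v$ while keeping $x'\ll v'+w$, because the induction hypothesis needs the strict relation $v'\ll v$ rather than the plain inequality $v\leq y_1+\ldots+y_{n-1}$. This uses the axioms (O2)--(O4) satisfied by $\Cu(A)$: every element is the supremum of a $\ll$-increasing sequence, and suprema of increasing sequences are compatible with addition. The rest is bookkeeping.
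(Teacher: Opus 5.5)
Your proof is correct and is essentially the paper's induction via (\ref{O6}). The only differences are cosmetic: you split off the last summand $y_n$ rather than the first, and you also treat the trivial case $n=1$.

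One small tightening is needed. From $x'\ll v+w=\sup_k(v_k+w)$, the definition of $\ll$ only yields $x'\leq v_k+w$. To get $x'\ll v_k+w$, first interpolate $x'\ll x''\ll v+w$ and apply the definition to $x''$. The paper glosses over the same step when it chooses $w'\ll w$ with $x'\ll z_1+w'$.
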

\begin{proof}
    The case $n=2$ follow from ~(\ref{O6}). Thus, assume the statement holds for some $n\geq 2$, and suppose 
    \[
    x'\ll x \leq y_1+\ldots +y_{n+1}.
    \]

    Use ~(\ref{O6}) for $y=y_1$ and $z=y_2+\ldots +y_{n+1}$ to find $z_1\ll x,y_1$ and $w\ll x,y_2+\ldots +y_{n+1}$ such that $x'\ll z_1+w$. Take $w'\in\Cu (A)$ such that $w'\ll w$ and $x'\ll z_1+w'$, and apply the induction hypothesis to $w'\ll w\leq y_2+\ldots +y_{n+1}$ to obtain elements $z_2,\ldots ,z_{n+1}$ such that $z_i\ll w,y_i$ and $w'\ll z_2+\ldots +z_{n+1}$.

    Since $w\ll x$, it follows that $z_1,\ldots ,z_{n+1}$ satisfy the desired conditions.
\end{proof}

\begin{lem}
\label{prp:RefO7}
Let $A$ be a \ca{} and let $x_j',x_j,w\in \Cu (A)$ satisfy
\[
x_j'\ll x_j\leq w
\]
for $j=1,\ldots,n$.
Then, there exists $x\in \Cu (A)$ such that
\[
x_1',\ldots,x_n'\ll x \ll w, x_1+\ldots+x_n.
\]
\end{lem}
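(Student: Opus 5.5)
The plan is to argue by induction on $n$, in parallel with the proof of Lemma \ref{prp:RefO6}. The case $n=1$ is handled by interpolation: choose $x$ with $x_1'\ll x\ll x_1\leq w$. Then $x\ll w$ and $x\ll x_1$, as required. The case $n=2$ is exactly (\ref{O7}).

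Assume now that the statement holds for some $n\geq 2$, and let $x_j'\ll x_j\leq w$ for $j=1,\ldots,n+1$.

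\textbf{Step 1: apply the induction hypothesis to the first $n$ indices, leaving room.} For each $j\leq n$, interpolate to get $x_j'\ll x_j''\ll x_j$. Apply the induction hypothesis to $x_j''\ll x_j\leq w$ for $j=1,\ldots,n$. This gives $u\in\Cu(A)$ with
\[
x_1'',\ldots,x_n''\ll u\ll w,\ x_1+\ldots+x_n .
\]
The reason for using $x_j''$ rather than $x_j'$ is to keep a little room below $u$.

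\textbf{Step 2: shrink $u$.} Since $u$ is the supremum of a $\ll$-increasing sequence and each $x_j'\ll u$ for $j\leq n$, there is $u'\ll u$ with $x_1',\ldots,x_n'\ll u'$. We now have $u'\ll u\leq w$ and $x_{n+1}'\ll x_{n+1}\leq w$.

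\textbf{Step 3: combine with the last index.} Apply (\ref{O7}) to these two pairs. This gives $x$ with
\[
u',x_{n+1}'\ll x\ll w,\ u+x_{n+1}.
\]
Then $x_1',\ldots,x_n'\ll u'\ll x$ and $x_{n+1}'\ll x$, and also $x\ll w$. Moreover,
\[
x\ll u+x_{n+1}\leq x_1+\ldots+x_n+x_{n+1},
\]
using $u\ll x_1+\ldots+x_n$ and the compatibility of addition with the order. This finishes the induction.

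The only delicate point is bookkeeping the compact containments so that (\ref{O7}) can be applied with a strict $u'\ll u$. This is handled by interpolating in Step 1 and choosing $u'$ in Step 2, using that every element is the supremum of elements compactly contained in it and that $\ll$ is compatible with finite suprema of finitely many elements below.
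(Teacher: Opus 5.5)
Your proof is correct and follows the paper's argument. As in the paper, you apply the induction hypothesis to the first $n$ pairs to get $u$ (the paper's $r$), shrink it to some $u'\ll u$ still lying above $x_1',\ldots,x_n'$, and then apply (\ref{O7}) to $u'\ll u\leq w$ and $x_{n+1}'\ll x_{n+1}\leq w$.

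The extra interpolation $x_j'\ll x_j''\ll x_j$ in Step 1 is harmless but unnecessary. Knowing $x_j'\ll u$ for finitely many $j$ already yields a suitable $u'$: take a late enough term of a $\ll$-increasing sequence with supremum $u$.
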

\begin{proof}
    The case $n=2$ follow from ~(\ref{O7}). As before, assume the statement holds for some $n\geq 2$, and let $x_j',x_j,w\in \Cu (A)$ satisfy
    \[
    x_j'\ll x_j\leq w
    \]
    for $j=1,\ldots,n+1$.

    By the induction hypothesis applied to the first $n$ inequalities, we find $r\in \Cu (A)$ such that $x_i'\ll r\ll w,x_1+\ldots +x_n$ for $i=1,\ldots ,n$. Take $r'\in\Cu (A)$ such that $r'\ll r$ such that $x_i'\ll r'$ for all $i=1,\ldots ,n$, and apply ~(\ref{O7}) to the pair $r'\ll r\leq w$ and $x_{n+1}'\ll x_{n+1}\leq w$ to find $x\in\Cu (A)$ such that $r',x_{n+1}'\ll x\ll w,r+x_{n+1}$.

    Using that $r\leq x_1+\ldots +x_n$ and $x_i'\leq r$ for each $i\leq n$, we see that $x$ satisfies the required conditions.
\end{proof}

\begin{lem}\label{prp:RedandAmp}
    Let $A$ be a $\mathrm{C}^*$-algebra and let $M\in\mathbb{N}$. Assume that, for every $N\geq 1$ and every pair $x',x\in \Cu (A)$ such that $x'\ll x$, there exists $y\in\mathrm{Cu} (A)$ satisfying
    \[
        x'\ll Ny\ll Mx.
    \]

    Then, there exists $M_1\in\N$ such that, for every $N\geq 1$ and every pair $x',x\in \Cu (A)$ such that $x'\ll x$, there exists $z\in\mathrm{Cu} (A)$ with
    \[
        z\ll x\quad\text{and}\quad 
        x'\ll Nz\ll M_1x.
    \]

    In particular, if $x=[a]$, $z$ can be taken to be of the form $z=[c]$ with $c\in \overline{a(A\otimes\mathcal{K})a}$.
\end{lem}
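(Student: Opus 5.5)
The plan is to deduce the lemma from the hypothesis together with the refinement lemmas \ref{prp:RefO6} and \ref{prp:RefO7}. The idea is to cut the element $y$ provided by the hypothesis into pieces lying below $x$, and then glue the pieces back together with (\ref{O7}). The main difficulty, discussed in the last paragraph, is keeping the multiplicity at exactly $N$ while doing this.

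Fix $N\geq 1$ and $x'\ll x$. First choose $x''$ with $x'\ll x''\ll x$. Next apply the hypothesis to $x''\ll x$ with this $N$, obtaining $y$ such that
\[
x''\ll Ny\ll Mx .
\]
Then apply Lemma~\ref{prp:RefO6} to
\[
x'\ll x''\leq y+\cdots+y \quad (N\text{ summands}).
\]
This gives $w_1,\ldots,w_N$ with $w_i\ll x'',y$ and $x'\ll w_1+\cdots+w_N$. Choose $w_i'\ll w_i$ such that still $x'\ll w_1'+\cdots+w_N'$; this is possible because $\ll$ is compatible with sums and suprema. Each $w_i$ then satisfies $w_i\leq x$ and $w_i\leq y$.

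To merge the pieces I would use Lemma~\ref{prp:RefO7}. Applied with $w=x$ to the pairs $w_i'\ll w_i\leq x$, it gives $z$ with
\[
w_1',\ldots,w_N'\ll z\ll x,\ w_1+\cdots+w_N .
\]
Then $x'\ll \sum_i w_i'\leq Nz$, and $z\ll x$ as required. It remains to bound $Nz$ by a fixed multiple of $x$. If instead one applies Lemma~\ref{prp:RefO7} with $w=y$, the resulting element $z_0$ satisfies $z_0\ll y$. Hence $Nz_0\leq Ny\ll Mx$, which gives the bound with $M_1=M$, but then $z_0\ll x$ is no longer guaranteed. The most promising route is to combine the two. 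I would first produce $z_0\ll y$ dominating the $w_i'$. Then I would apply (\ref{O6})/Lemma~\ref{prp:RefO6} to $z_0'\ll z_0\leq \sum_i w_i$, noting that each $w_i\leq x$, to split $z_0'$ into pieces $\ll x,y$. After that I would recombine under $w=x$ once more, possibly at the cost of enlarging $M_1$ to a fixed multiple of $M$.

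The main obstacle is this last point. One needs a single element $z$ that is simultaneously below $x$ and has $N$-fold multiple controlled by $Mx$, without picking up a factor depending on $N$. The naive estimate $z\leq \sum_i w_i\leq Ny$ only yields $Nz\leq N^2y$, which is not uniformly bounded by a multiple of $x$. If a direct argument fails, the fallback is to use the freedom in the hypothesis: apply it with a larger multiplier such as $NM$ in place of $N$, so that the extra factor produced by the gluing step is absorbed into $M_1$.

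For the final statement, write $x=[a]$ and note $z\ll x$. Lemma~\ref{L:CzBasic}~(\ref{L:CutDowninHer}) then shows that a representative of $z$ can be chosen as a cut-down Cuntz equivalent to some $c\in\overline{a(A\otimes\mathcal{K})a}$. Concretely, replace $z$ by $[(b-\varepsilon)_+]$ for a suitable representative $b$ of $z$ and small $\varepsilon$, keeping $x'\ll Nz$ by compact containment.
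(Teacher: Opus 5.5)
There is a genuine gap, and it is the one you flag yourself: you never produce a $z\ll x$ whose $N$-fold multiple is bounded by a multiple of $x$ independent of $N$, and neither repair you suggest closes it. The problem is structural. You split $x''$ along the $N$ summands of $Ny$, so anything glued from those pieces via Lemma~\ref{prp:RefO7} is only bounded by their sum. This gives $Nz\leq N^2y\leq NMx$. Your combined route has the same defect. Splitting $z_0'\ll z_0\leq w_1+\cdots+w_N$ again produces $N$ pieces, and recombining under $x$ only gives $z\leq Nz_0$. Moreover, $z_0$ is no better than $y$ itself, which already satisfies $x''\ll Ny\ll Mx$; the only missing feature was ever $z\ll x$. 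The fallback of feeding a larger multiplier such as $NM$ into the hypothesis makes things worse. The loss in the gluing step equals the number of pieces, which is exactly the multiplier you used, and you no longer even get multiplicity $N$.

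The missing idea is to reverse the roles: split $y$ along copies of $x$, not $x''$ along copies of $y$. Since $y\leq Mx=x+\cdots+x$ has only $M$ summands, Lemma~\ref{prp:RefO6} applied to $y'\ll y\leq Mx$ gives $y_1,\ldots,y_M$ with $y_j\leq x,y$ and $y'\leq y_1+\cdots+y_M$. Here $y'\ll y$ is chosen with $x'\ll ky'$. Since $x'\ll ky'\leq k(y_1+\cdots+y_M)$, you can pick $y_j'\ll y_j$ with $x'\leq k(y_1'+\cdots+y_M')$. Gluing them with Lemma~\ref{prp:RefO7} under $w=x$ yields $z\ll x$ with $y_j'\ll z\ll y_1+\cdots+y_M\leq My$. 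So gluing costs only the fixed factor $M$, while $x'\ll (kM)z$. The price is that the multiplicity is multiplied by $M$. You compensate by applying the hypothesis not with $N$ but with $k$ satisfying $kM\leq N\leq (k+1)M$. The case $N\leq M$ is handled by $z=x''$ with $x'\ll x''\ll x$. Then $x'\ll (kM)z\leq Nz$ and $Nz\leq 2kMz\ll 2kM\cdot My=2M^2(ky)\ll 2M^3x$, so $M_1=2M^3$ works. Your treatment of the final cut-down statement is fine.
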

\begin{proof}
    Set $M_1=2M^3$. Let $x',x\in \Cu (A)$ be such that $x'\ll x$ and let $N\geq 1$. Note that, if $N\leq M$, we can set $z=x''$ for some $x''\in\Cu (A)$ such that $x'\ll x''\ll x$. Thus, we may assume that $N>M$ and find $k\geq 1$ such that $kM\leq N\leq (k+1)M$.

    Applying the assumption in the lemma for $x'\ll x$ and $k$, find $y\in\Cu (A)$ such that $x'\ll ky\ll Mx$. Let $y'\in\Cu (A)$ be such that $y'\ll y$ and $x'\ll ky'$.
     
    Using Lemma \ref{prp:RefO6} for $y'\ll y\leq Mx$, find $y_1,\ldots ,y_{M}$ in $\mathrm{Cu} (A)$ such that
    \[
        y'\leq y_1+\ldots +y_{M}\quad\text{and}\quad 
        y_j\leq y,x
    \]
    for each $j$.

    In particular, note that $x'\ll ky'\leq ky_1+\ldots +ky_{M}$. Now for each $j$ choose $y_j'$ such that 
    \[
        y_j'\ll y_j\quad \text{and}\quad x'\leq ky_1'+\ldots +ky_{M}'.
    \]
    Note that each $y_j'$ satisfies $y_j'\ll y_j\leq x$. Using Lemma \ref{prp:RefO7}, there exists $z\in\mathrm{Cu} (A)$ such that
    \[
        y_j'\ll z\ll x, y_1+\ldots +y_{M}.
    \]

    Then,
    \[
        x'\leq ky_1'+\ldots +ky_{M}'\ll (kM)z\leq N z 
    \]
    and
    \[
        Nz\leq 2(kM)z
        \ll 2(kM)(y_1+\ldots +y_{M})\leq 2(kM)(My)=2M^2(ky)\leq 2M^3 x=M_1x,
    \]
    as desired.

    Now, if $x=[a]$, note that $z\leq [a]$ and that $x'\ll Nz$. Upon passing to a small enough cut-down of $z$, we may assume by Part~(\ref{L:CutDowninHer}) of Lemma~\ref{L:CzBasic} that $z=[c]$ with $c\in \overline{(a(A \otimes \mathcal{K}) a)_+}$.
\end{proof}

\begin{pgr}[The Global Glimm Property]\label{pgr:GGP}
    A \ca{} $A$ is said to satisfy the \emph{Global Glimm Property} if for every $a\in A_+$ and $\varepsilon>0$ there exists a square-zero element $r\in\overline{aAa}$ such that $(a-\varepsilon)_+\in \overline{ArA}$. This property was originally introduced by Kirchberg and R\o{}rdam in their study of non-simple purely infinite \ca{s} \cite[Definition~4.12]{KirRor02InfNonSimpleCalgAbsOInfty}, but has recently found various and deep applications in the context of dimension reduction phenomena; see, for example, \cite{ThiVil23Glimm,Vil25:IntroGGP}.

    As shown in \cite[Theorem~3.6]{ThiVil23Glimm}, the property can be characterized in terms of a divisibility condition on the Cuntz semigroup: $A$ has the Global Glimm Property if and only if $\Cu (A)$ is $(2,\omega )$-divisible, that is, for  every pair $x',x\in\Cu (A)$ such that $x'\ll x$, there exists $y\in\Cu (A)$ and $n\in\N$ such that $2y\leq x$ and $x'\leq ny$.
\end{pgr}

\begin{theorem}\label{prp:Gen_DimRed}
    Let $A$ be a \ca{}, and let $m,M\in\mathbb{N}$. Assume that $A$ has $m$-comparison and that, for every $N\geq 1$ and every pair of elements $x', x\in \Cu (A)$ such that $x'\ll x$, there exists $y\in\mathrm{Cu} (A)$ satisfying 
    \[
        x'\ll Ny\ll M x.
    \]

    Then, the following are equivalent:
    \begin{itemize}
        \item[(i)] $A$ has the Global Glimm Property;
        \item[(ii)] there exists $L\in\N$ such that, for every pair $x',x\in\Cu (A)$ with $x'\ll x$, there exist $y_0,y_1\in\mathrm{Cu} (A)$ satisfying
        \[
        y_0+y_1\leq x,\quad\text{and}\quad 
        x'\ll Ly_0, Ly_1;
        \]
        \item[(iii)] $A$ is pure.
    \end{itemize}
\end{theorem}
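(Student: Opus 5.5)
We prove (iii)$\Rightarrow$(i)$\Rightarrow$(ii)$\Rightarrow$(iii).

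\emph{(iii)$\Rightarrow$(i).} Take $x'\ll x$ and pick $x'\ll x''\ll x$. Almost divisibility with $N=2$ gives $y$ with $2y\leq x$ and $x''\leq 3y$. Hence $x'\leq 3y$, so $\Cu(A)$ is $(2,\omega)$-divisible. By \cite[Theorem~3.6]{ThiVil23Glimm} (Paragraph \ref{pgr:GGP}), $A$ has the Global Glimm Property.

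\emph{(i)$\Rightarrow$(ii).} Fix $x'\ll x$ and use the same $(2,\omega)$-divisibility to get $y$ with $2y\leq x$ and $x'\leq ny$. The difficulty is that $n$ depends on $x'$, and we need a uniform $L$. We use divisibility to remove this dependence.
\begin{itemize}
\item Apply Lemma \ref{prp:RedandAmp} (its hypothesis is exactly ours) to obtain $M_1$ such that, for every $N$, there is $z\ll x$ with $x'\ll Nz\ll M_1x$.
\item Choose $x''$ with $x'\ll x''\ll x$. Apply the Global Glimm Property to a cut-down of $z$ to get $w$ with $2w\leq z$ and $z'\leq n w$, where $z'\ll z$ and $x'\ll Nz'$.
\item Now $2Nw\leq Nz\leq M_1x$, while $x'\leq Nnw$.
\end{itemize}
This alone does not give $y_0+y_1\leq x$ with a uniform $L$; the point is to trade the large $n$ against the amplification available from $N$. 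A cleaner route is the technique of \cite{AntPerThiVil24arX:PureCAlgs}. From $(2,\omega)$-divisibility plus $m$-comparison, $x\geq 2y$ and $x'\leq ny$ give $x'\leq_s$-type relations. Using the divisibility hypothesis, we find $u$ with $x'\ll (N+1)u$ and $Nu\ll Mx$ for $N$ large compared with $M$ and $m$. We then split $u$ into two "halves" via the Glimm element and compare each half with $x'$ using $m$-comparison.

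Concretely, the plan is:
\begin{enumerate}
\item Use Lemma \ref{prp:RefO6} to decompose $Mx$-bounded elements into pieces below $x$.
\item Use Lemma \ref{prp:RefO7} to combine the pieces below $x$.
\item Deduce $y_0+y_1\leq x$ with $x'\ll L y_i$, where $L=L(m,M)$ comes from bounding $(m+1)$ applications of the $\leq_s$ comparisons.
\end{enumerate}
This is the step I expect to be the main obstacle: getting constants independent of $x$, and in particular ensuring the soft relation $x'\leq ny$ is upgraded to $x'\leq_s$ relative to suitable pieces. $m$-comparison is what converts $x'\leq_s y_j$ for $m+1$ pieces into a genuine inequality.

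\emph{(ii)$\Rightarrow$(iii).} The plan is to show $A$ is $(m',n')$-pure for some $m',n'$ and invoke the dimension reduction theorem of \cite{AntPerThiVil24arX:PureCAlgs}, which says $(m,n)$-pure implies pure.
\begin{itemize}
\item $m$-comparison is given.
\item For $n'$-almost divisibility, fix $x'\ll x$ and $N$. By Lemma \ref{prp:RedandAmp}, choose $z\ll x$ with $x'\ll Kz\ll M_1x$ for a large $K$ to be fixed.
\item Iterate (ii) on $z$ $k$ times to produce $2^k$ elements $y_i$ with $\sum y_i\leq z$ and $x'$ (relative to $z$) $\ll L^k y_i$.
\item Apply $m$-comparison to these $y_i$ to obtain an element $y$ with $Ny\leq x$ and $x'\leq (N+1)(n'+1)y$, where $n'$ depends only on $m,M,L$.
\end{itemize}
The bookkeeping must be arranged so that $L^k$ is absorbed by $2^k$ pieces via comparison. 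Since $2y_i\leq$ the other pieces summed, $m$-comparison yields $y_i$-multiples bounded below. Finally, pass to pureness via \cite{AntPerThiVil24arX:PureCAlgs}.
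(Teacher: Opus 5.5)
Your (iii)$\Rightarrow$(i) is correct and is exactly the paper's argument. The implication (i)$\Rightarrow$(ii), however, is not proved, and the mechanism you propose cannot give a uniform $L$. Halving by the Global Glimm Property ($2w\le z$, $z'\le nw$) carries no size information about $w$. In a simple algebra any nonzero square-zero element of the hereditary subalgebra qualifies, so $w$ can be tracially tiny and $n$ huge. Meanwhile $m$-comparison only converts relations of the form $\leq_s$ into inequalities; it cannot turn $x'\le nw$ with unbounded $n$ into $x'\le Lw$. In particular, your claim that $2y\le x$ and $x'\le ny$ yield ``$\leq_s$-type relations'' is false. Your steps (1)--(3) also never say where two orthogonal pieces of $x$, each uniformly large, come from.

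The missing idea is to pair a divisibility piece with its \emph{complement}. Apply Lemma~\ref{prp:RedandAmp} with $N=M_1+2$ to cut-downs $e''\ll e'$ of $x$, obtaining $f\ll e'$ with $e''\ll (M_1+2)f\ll M_1e'$. Let $y_0$ be a functional-calculus cut-down of $f$, which gives $x'\ll Ly_0$. Let $y_1=g$ be the part of $x$ orthogonal to it, arranged so that $y_0+y_1\le x$ and $e'\le f+g$ (as with $d^0,d^1$ in the proof of Proposition~\ref{prp:CXNoQuot}). Then $(M_1+2)e'\le M_1e'+(M_1+2)g$, so $2d_\tau(e')\le (M_1+2)d_\tau(g)$ whenever $d_\tau(e')<\infty$. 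Now $m$-comparison together with \cite[Lemma~1]{robert2010nuclear} yields $e'\le (m+1)(M_1+2)g$, and $L=2(m+1)(M_1+2)$ works. The Global Glimm Property (after reducing to the separable case, following \cite[Lemma~6.2]{AntPerThiVil24arX:PureCAlgs}) is what must handle quasitraces with $d_\tau(e')=\infty$, that is, show that $e'$ lies in the ideal generated by the complement. Compare Proposition~\ref{prp:CXNoQuot}, where this same step is supplied instead by the absence of compact properly infinite quotients. None of this appears in your sketch.

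For (ii)$\Rightarrow$(iii) you have the right ingredients, but the bookkeeping you describe does not work. A piece $y_i$ obtained by iterating (ii) on $z$ is only known to satisfy $z''\ll L^ky_i$ and $y_i\le z$. For $L>2$ these pieces are not uniformly comparable, nothing forces $Ny_i\le x$, and no choice of $k$ lets $2^k$ pieces ``absorb'' $L^k$.

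The pieces play a different role. The divisor is the element $z$ from Lemma~\ref{prp:RedandAmp}, whose only defect is the factor $M_1$ in $Nz\ll M_1x$, and the pieces are there to remove that factor. Iterate (ii) on $x''\ll x$, not on $z$, to get just $m+1$ pieces $y_0+\dots+y_m\le x$ with $x''\ll L_1y_j$. Choose $z$ with $N=2kL_1M_1$, and $z'\ll z$, $x''\ll x$ such that $x'\ll 2kL_1M_1z'\ll M_1x''$. Then $(L_1M_1+1)kz'\le 2kL_1M_1z'\ll L_1M_1y_j$, so $kz'\leq_s y_j$ for every $j$. Now $m$-comparison gives $kz'\le y_0+\dots+y_m\le x$, while $x'\ll (k+1)(2L_1M_1+1)z'$. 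Hence $A$ is $(m,2L_1M_1)$-pure, and \cite[Theorem~5.7]{AntPerThiVil24arX:PureCAlgs} finishes the proof.
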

\begin{proof}
    Using Lemma \ref{prp:RedandAmp}, there exists $M_1\in\N$ such that, for every $N\geq 1$ and every pair of elements $x'\ll x$ in $\Cu (A)$, there exists $z\in\mathrm{Cu} (A)$ such that 
    \[
        z\ll x\quad\text{and}\quad 
        x'\ll Nz\ll M_1 x.
    \]

    \noindent\textbf{(i)~implies~(ii).} The proof of this implication follows the same steps as \cite[Lemma~6.2]{AntPerThiVil24arX:PureCAlgs} by carefully applying our divisibility results instead of the ones obtained in \cite{AntPerThiVil24arX:PureCAlgs}. We encourage the reader to follow the proof of \cite[Lemma~6.2]{AntPerThiVil24arX:PureCAlgs} and, in parallel, apply the modifications listed below. Specifically, the necessary adjustments are:

    First, the constant $L$ is set to be $2(m+1)(M_1+2)$. Further, one notes that both the Global Glimm Property and the divisibility and comparison properties satisfied by $A$ are separably inheritable. Using \cite[Proposition~6.1]{ThiVil21DimCu2}, there exists a separable sub-\ca{} $B$ of $A$ such that: $B$ satisfies the Global Glimm Property and the same comparison and divisibility conditions of $A$; the induced inclusion $\Cu(\iota )\colon\Cu (B)\to\Cu (A)$ is an order-embedding; and $x',x\in {\rm im}(\Cu (\iota))$. Thus, as in \cite[Lemma~6.2]{AntPerThiVil24arX:PureCAlgs}, it suffices to prove (i)~implies~(ii) for separable \ca{s} and we may assume $A$ to be separable.

    When defining $f\in\mathrm{Cu} (A)$, we apply Lemma \ref{prp:RedandAmp} with $N=M_1+2$. This will give
    \begin{equation}\label{Eqn:ana6.2}
        f\ll e',\quad\text{and}\quad 
        e''\ll 
        (M_1+2)f\ll 
        M_1e'.
    \end{equation}

    Afterwards, one continues the proof until the ninth displayed equation, where with our new constants we obtain the inequality
    \[
        x'\ll 2e''\leq 
        2(M_1+2)f'=2(M_1+2)y_0\leq 
        Ly_0.
    \]

    Finally, multiply equation (2) in \cite[Lemma~6.2]{AntPerThiVil24arX:PureCAlgs} in the proof by $M_1+2$, and using (\ref{Eqn:ana6.2}) at the second step,  we get
    \[
        (M_1+2)e'\leq (M_1+2)f+(M_1+2)g\leq M_1e'+(M_1+2)g
    \]
    instead of equation (3) in \cite[Lemma~6.2]{AntPerThiVil24arX:PureCAlgs}.

    Continuing the proof of \cite[Lemma~6.2]{AntPerThiVil24arX:PureCAlgs}, one can show ---in the notation of Remark \ref{rem:StCompAlmUnperf}--- that there exists $\varepsilon\in (0,1)$ such that $d_\tau(e')\leq (1-\varepsilon)(M_1+2)d_\tau(y_1)$ for any $[0,\infty]$-valued lower-semicontinuous $2$-quasitrace $\tau$ (this $\varepsilon$ does not appear in the proof of \cite[Lemma~6.2]{AntPerThiVil24arX:PureCAlgs}, but follows from the fact that $d_\tau (e')\leq 2(1-\varepsilon)d_\tau (e')\leq (1-\varepsilon)(M_1+2)d_\tau (y_1)$ whenever $d_\tau (e')$ is finite).

    Using $m$-comparison and \cite[Lemma~1]{robert2010nuclear}, we get \[e'\leq (m+1)(M_1+2)y_1.\] It then follows that $x'\ll 2e'\leq 2(m+1)(M_1+2)y_1=Ly_1$, as desired.\vspace{0.25cm}

    \noindent\textbf{(ii) implies (iii).} Assuming (ii), an induction argument proves the following Claim (see the proof of \cite[Lemma~6.3]{AntPerThiVil24arX:PureCAlgs}):\vspace{0.25cm}

    \noindent\textbf{Claim.} For every $l\in\N$ there exists $L_1\in\N$ such that, for every pair $x'\ll x$, there exist $y_0,\ldots ,y_l\in\mathrm{Cu} (A)$ such that
    \[
        y_0+\ldots +y_l\leq x,\quad\text{and}\quad 
        x'\ll L_1y_j
    \]
    for each $j$.\vspace{0.25cm}

    Following the strategy of \cite[Proposition~6.4]{AntPerThiVil24arX:PureCAlgs}, we will prove that $A$ is $n$-almost divisible for $n=2L_1M_1$, where $L_1$ is the number given by the Claim for $l=m$. The result will then be a consequence of \cite[Theorem~5.7]{AntPerThiVil24arX:PureCAlgs}.

   Fix $k\geq 1$, and take a pair $x'\ll x$ in $\mathrm{Cu} (A)$. Apply Lemma \ref{prp:RedandAmp} for $N=2kL_1M_1$. We obtain $z\in\mathrm{Cu} (A)$ such that
    \[
        z\ll x,\quad\text{and}\quad 
        x'\ll (2kL_1M_1)z\ll M_1x.
    \]

    Choose $z'\in\Cu (A)$ such that $z'\ll z$ and $x'\ll (2kL_1M_1)z'$. Then, choose $x''\in\Cu (A)$ such that $x''\ll x$ and $(2kL_1M_1)z'\ll M_1x''$.

    Apply the Claim to $x''\ll x$ and $l=m$ to obtain $y_0,\ldots,y_m\in\Cu (A)$ such that
    \[
        y_0+\ldots +y_m\leq x,\quad\text{and}\quad 
        x''\ll L_1y_j
    \]
    for each $j$.

    For each fixed $j$, we have
    \[
        (L_1M_1+1)(kz')\leq (2kL_1M_1)z'\ll M_1x''\ll (L_1M_1)y_j,
    \]
    which implies $kz'<_s y_j$ for every $j$ (Notation \ref{ntn:s_below}). Thus, using $m$-comparison we have
    \[
        kz'\leq y_0+\ldots +y_m\leq x
    \]
    and
    \[
        x'\ll (2kL_1M_1)z'\leq (k+1)(2L_1M_1+1)z'
        = (k+1)(n+1)z'.
    \]

    This shows that $A$ is $n$-almost divisible, as desired.\vspace{0.25cm}

    \noindent\textbf{(iii) implies (i).} Note that almost divisibility implies $(2,\omega )$-divisibility. Thus, any pure \ca{} has the Global Glimm Property by the comments in Paragraph \ref{pgr:GGP}.
\end{proof}

\begin{rem}
    It is plausible that any \ca{} $A$ satisfying $m$-comparison and the divisibility condition of Theorem \ref{prp:Gen_DimRed} automatically possesses the Global Glimm Property (and is therefore pure). Indeed, any such algebra is \emph{nowhere scattered}—meaning no nonzero ideal of a quotient is elementary. Whether every nowhere scattered \ca{} enjoys the Global Glimm Property is a major open question, known as the \emph{Global Glimm Problem}; see the introduction of \cite{ThiVil23Glimm} for an overview to the problem.
\end{rem}

\begin{theorem}\label{prp:CXAPureIfGGP}
    Let $X$ be a finite-dimensional compact metric space and let $A$ be a pure \ca{}. Then, $C(X,A)$ is pure if and only if $C(X,A)$ has the Global Glimm Property.
\end{theorem}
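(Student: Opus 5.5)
The plan is to verify the hypotheses of Theorem \ref{prp:Gen_DimRed} for the algebra $C(X,A)$, with $m=\dim(X)$, and then read off the equivalence of (i) and (iii) there.

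First, $A$ is pure, so $\Cu(A)$ is almost unperforated. Equivalently, by Remark \ref{rem:StCompAlmUnperf}, $A$ has strict comparison. Proposition \ref{comp_c(x,A)} then shows that $C(X,A)$ has $m$-comparison.

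Second, $A$ is almost divisible, so Corollary \ref{robert_analogy_2} supplies a constant $M=2(m+1)^2$ with the following property: for every $N\geq 1$ and every pair $x'\ll x$ in $\Cu(C(X,A))$, there is $y$ with $x'\ll Ny\ll Mx$. This is exactly the divisibility hypothesis of Theorem \ref{prp:Gen_DimRed}.

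Applying Theorem \ref{prp:Gen_DimRed} to $C(X,A)$ with these $m$ and $M$, the equivalence (i)$\Leftrightarrow$(iii) says that $C(X,A)$ is pure if and only if it has the Global Glimm Property. This is the statement.

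I expect no genuine obstacle, since the argument is just an assembly of earlier results. The only points to check are bookkeeping:
\begin{itemize}
\item The covering dimension $m$ is a natural number, as Proposition \ref{robert_analogy} requires. If $m=0$, the constants are still fine.
\item The constant $M$ from Corollary \ref{robert_analogy_2} depends only on $\dim(X)$, and not on $N$ or on the pair $x'\ll x$, as Theorem \ref{prp:Gen_DimRed} demands.
\item The implication "pure implies the Global Glimm Property", direction (iii)$\Rightarrow$(i), needs no finite-dimensionality. It holds in general, because almost divisibility implies $(2,\omega)$-divisibility, so that direction is immediate in any case.
\end{itemize}
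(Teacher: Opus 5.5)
Your proposal is correct and matches the paper's proof: $m$-comparison comes from Proposition \ref{comp_c(x,A)}, the divisibility hypothesis comes from Corollary \ref{robert_analogy_2} with $M=2(\dim(X)+1)^2$, and then the equivalence (i)$\Leftrightarrow$(iii) of Theorem \ref{prp:Gen_DimRed} gives the statement. Your bookkeeping remarks are also accurate: $M$ depends only on $\dim(X)$, and pure implies the Global Glimm Property without any dimension assumption.
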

\begin{proof}
    Proposition \ref{comp_c(x,A)} shows that $C(X,A)$ has $m$-comparison for $m=\dim (X)$. Further, it follows from Corollary \ref{robert_analogy_2} that $C(X,A)$ satisfies the divisibility condition in Theorem \ref{prp:Gen_DimRed}. Thus, Theorem \ref{prp:Gen_DimRed} gives the desired equivalence.
\end{proof}

\begin{cor}\label{Purehomogenous}
    Let $A$ be a simple pure \ca{}, and let $X$ be a compact metric space. Then, $C(X,A)$ is pure.
\end{cor}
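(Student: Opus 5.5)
The plan is to reduce first to finite-dimensional $X$ and then pass to general compact metric spaces via inductive limits, as announced in the strategy of the introduction.

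\emph{Finite-dimensional case.} Assume $\dim(X)=m<\infty$. By Theorem~\ref{prp:CXAPureIfGGP}, $C(X,A)$ is pure as soon as it has the Global Glimm Property. We split into two cases.
\begin{itemize}
\item If $A$ is elementary (that is, $A\cong\mathcal{K}(H)$, or $A$ is finite-dimensional simple), then pureness of $A$ forces $A=0$. Indeed, $\Cu(M_n)=\overline{\N}$ and $\Cu(\mathcal{K})=\overline{\N}$ are not almost divisible: take $x'=x=1$ and $N=2$; any $y$ with $2y\leq 1$ must be $0$, so $x'\leq 3y$ fails. The statement is then trivial.
\item If $A$ is nonelementary, then $A$ is a simple nonelementary \ca{}, hence has the Global Glimm Property (for simple algebras this is just nonelementariness). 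Now apply \cite[Theorem~4.3]{BlaKir04GlimmHalving}: for finite-dimensional $X$ and simple $A$ with the Global Glimm Property, $C(X,A)$ has the Global Glimm Property. Theorem~\ref{prp:CXAPureIfGGP} then gives pureness.
\end{itemize}

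\emph{General case.} Write $X$ as an inverse limit of finite-dimensional compact metric spaces (for instance polyhedra, via nerves of finer and finer finite open covers), $X=\varprojlim X_i$. Then
\[
C(X,A)\cong\varinjlim C(X_i,A)=\varinjlim C(X_i)\otimes A .
\]
Each $C(X_i,A)$ is pure by the previous step. Pureness passes to sequential inductive limits, since almost unperforation and almost divisibility are preserved by limits of Cuntz semigroups using continuity of $\Cu$. Therefore $C(X,A)$ is pure.

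\emph{Expected obstacle.} The nontrivial input is the Global Glimm Property for $C(X,A)$, which is outsourced to Blanchard--Kirchberg. The second delicate point is making sure the inductive limit permanence applies. The connecting maps $C(X_i,A)\to C(X_{i+1},A)$ are injective $^*$-homomorphisms but not unital in any special sense, and we need stability of almost unperforation and almost divisibility under $\Cu$-limits. For almost divisibility this follows because any $x'\ll x$ in the limit lifts to a finite stage. For almost unperforation, inequalities $(k+1)x\leq ky$ in the limit imply $(k+1)x'\leq ky$ at a finite stage after cutting down. Both are standard arguments, and I would cite the known permanence of pureness under inductive limits rather than redo them.
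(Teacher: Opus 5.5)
Your proposal is correct and follows the paper's proof. Like the paper, you handle finite-dimensional $X$ by combining \cite[Theorem~4.3]{BlaKir04GlimmHalving} with Theorem~\ref{prp:CXAPureIfGGP}, and then treat general $X$ by writing $C(X,A)$ as an inductive limit of $C(X_i,A)$ with $X_i$ finite-dimensional and citing \cite[Theorem~3.8]{perera2025extensions}. Your elementary/nonelementary case split is valid but not needed: almost divisibility of $A$ already gives $(2,\omega)$-divisibility, and hence the Global Glimm Property for $A$, exactly as in (iii)$\Rightarrow$(i) of Theorem~\ref{prp:Gen_DimRed}.
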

\begin{proof}
    If $X$ is finite-dimensional, $C(X,A)$ has the Global Glimm Property by \cite[Theorem~4.3]{BlaKir04GlimmHalving}. Thus, the result follows from Theorem \ref{prp:CXAPureIfGGP}.

    If $X$ is arbitrary, write $C(X,A)$ as $\lim_\lambda C(X_\lambda ,A)$ with $X_\lambda$ finite-dimensional. Since pureness passes to inductive limits \cite[Theorem~3.8]{perera2025extensions}, this drops the finite-dimensionality assumption from $X$.
\end{proof}

We now move our attention to non-simple \ca{s}. Here, instead of focusing on the Global Glimm Property, we study when condition (ii) in Theorem \ref{prp:Gen_DimRed} is satisfied. Building on the ideas from \cite[Proposition~3.4]{RobTik17NucDimNonSimple}, we will see that $C(X,A)$ satisfies (ii) whenever $A$ is pure and has no quotient whose Cuntz semigroup contains a nonzero, compact, properly infinite element. Examples of \ca{s} satisfying this `residual finiteness' condition include residually stably finite \ca{s} (e.g., stable rank one \ca{s}), since each of their quotients have a stably finite Cuntz semigroup. For \ca{s} of finite nuclear dimension, the condition is equivalent to having no simple purely infinite quotients \cite[Lemma~3.3]{RobTik17NucDimNonSimple}.

\begin{prop}\label{prp:CXNoQuot}
    Let $X$ be a finite-dimensional compact metric space, and let $A$ be a pure \ca{}. Assume that $A$ has no quotient whose Cuntz semigroup contains a nonzero, compact, properly infinite Cuntz class. Then, there exists $L\in\N$ such that, for every pair $x',x\in\Cu (C(X,A))$ with $x'\ll x$, there exist $y_0,y_1\in\mathrm{Cu} (C(X,A))$ satisfying
        \[
        y_0+y_1\leq x,\quad\text{and}\quad 
        x'\ll Ly_0, Ly_1.
        \]
\end{prop}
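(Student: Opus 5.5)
The strategy follows \cite[Proposition~3.4]{RobTik17NucDimNonSimple}. We first obtain condition (ii) of Theorem \ref{prp:Gen_DimRed} pointwise in $A$, with a uniform constant. We then glue the pointwise pieces over $X$ using the $m$-decomposable covers of Lemma \ref{lem:CuntzSubC(X,A)} and Proposition \ref{robert_analogy}, and absorb the resulting loss with $m$-comparison (Proposition \ref{comp_c(x,A)}).

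\textbf{Step 1: splitting in $A$ itself.} We show that for every $a'\ll a$ in $\Cu(A)$ there are $b_0,b_1$ with $b_0+b_1\leq a$ and $a'\ll 3b_0,3b_1$. By almost divisibility (Definition \ref{def_almost_divisibility}) with $N=2$, pick $b$ with $2b\leq a$ and $a'\leq 3b$, and set $b_0=b_1=b$. This gives the splitting with constant $3$, up to a small cut-down to get compact containment.

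The residual finiteness hypothesis will be needed because of the next point. We need the splitting of $f$ to be compatible with Cuntz comparison in $C(X,A)$. Over $X$, comparison only holds up to the factor $m+1$ and only for $\leq_s$ relations. We must therefore ensure that the inequality $x'\ll L y_j$ can be deduced from fibrewise data, without ever needing $x$ itself to be infinite in a fibre.

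\textbf{Step 2: gluing over $X$.} Let $x=[f]$, let $x'\ll[(f-\varepsilon)_+]$, and take $\mathcal U=\bigcup_k\{U_{k,j}\}_{j\in J_k}$, $h_{k,j}$, $\psi_k$, $\varphi_k$ as in the proof of Proposition \ref{robert_analogy}. Apply Step 1 in $\bigoplus_{j\in J_k}A$ to $\big(\psi_k(f)-\tfrac{\varepsilon}{4(m+1)}\big)_+\ll\big(\psi_k(f)-\tfrac{\varepsilon}{8(m+1)}\big)_+$. This gives elements $b^{(k)}_0,b^{(k)}_1$, which can be taken inside the hereditary subalgebra by Lemma \ref{L:CzBasic}(\ref{L:CutDowninHer}).

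A naive choice would be $y_i=\sum_k[\varphi_k(b^{(k)}_i)]$. Then the estimate (\ref{second_ine}) gives $x'\leq 3\sum_k[\varphi_k(b_i^{(k)})]=3y_i$. However, the bound $y_0+y_1\leq x$ fails: we only get $\leq(m+1)^2x$. To repair this, apply the same construction with $N=2(m+1)^2+1$ copies. Using divisibility as in Corollary \ref{robert_analogy_2} together with Lemma \ref{prp:RedandAmp}, pick $z\ll x$ with $x'\ll Nz\ll M_1x$. Then choose disjoint pieces inside $x$ by Lemmas \ref{prp:RefO6} and \ref{prp:RefO7}.

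\textbf{Step 3: upgrading the sub-inequalities.} The main obstacle is the following. Comparison in $C(X,A)$ (Proposition \ref{comp_c(x,A)}) only yields $u\leq v_0+\dots+v_m$ from $u\leq_s v_j$, so the $y_j$ that sum to at most $x$ must be built from $x$ itself. We intend to prove, by contradiction, a \emph{tracial} lower bound: every lower semicontinuous $2$-quasitrace $\tau$ on $C(X,A)$ with $d_\tau(x)$ finite satisfies $d_\tau(x')\leq L'\,d_\tau(y_j)$.

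If no such uniform bound exists, a standard ultraproduct or compactness argument on the cone of functionals produces a functional on $\Cu(C(X,A))$. This functional is concentrated at points of $X$ and is infinite on $y_j$ but finite on $x$, or it is $\{0,\infty\}$-valued in a nonzero way on $x$. Pulling it back along ${\rm ev}_t$ to a quotient of $A$ then forces the ideal-quotient generated by $f(t)$ to contain a compact, properly infinite nonzero class. This is exactly what the hypothesis excludes, and it is the mechanism of \cite[Lemma~3.3, Proposition~3.4]{RobTik17NucDimNonSimple}.

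Once the tracial bound holds with a strict inequality, we apply $m$-comparison and \cite[Lemma~1]{robert2010nuclear}, exactly as at the end of the proof of Theorem \ref{prp:Gen_DimRed} (i)$\Rightarrow$(ii). This converts it into $x'\ll(m+1)L'y_j$, so $L=(m+1)L'$ works. The expected hardest part is this Step 3: identifying the limiting functional and showing that it produces a compact properly infinite element in a quotient of $A$.
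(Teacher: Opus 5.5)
There is a genuine gap: you never construct $y_0,y_1$ with $y_0+y_1\le x$, and your planned use of the residual finiteness hypothesis does not work.

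\textbf{The construction of $y_0,y_1$.} In Step~2 you rightly note that gluing fibrewise splittings only gives $y_0+y_1\le (m+1)^2x$. The suggested repair (``choose disjoint pieces inside $x$ by Lemmas~\ref{prp:RefO6} and~\ref{prp:RefO7}'') has no content. Those lemmas refine a sum and produce common upper bounds, but they never produce a \emph{complement} of a given piece inside $x$, which is what is needed. The missing construction is global, not fibrewise. Let $M_1$ be the constant of Lemma~\ref{prp:RedandAmp} and apply that lemma with $N=2M_1$. This gives $c\in\overline{fC(X,A)f}_+$ and $\delta>0$ with $x'\le 2M_1[(c-\delta)_+]$ and $2M_1[c]\ll M_1[f]$. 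Put $y_0=[g_\delta(c)]$, and let $y_1$ be the class of the orthogonal complement $d^1=(1-g_{\delta/2}(c))^{1/2}f(1-g_{\delta/2}(c))^{1/2}$. Since $g_\delta(c)\perp d^1$ inside $\overline{fC(X,A)f}$, you get $y_0+y_1\le x$. The bound $x'\le 2M_1y_0$ is immediate. You also get the complement inequality $[f]\le[g_{\delta/2}(c)]+y_1$, which yields $2M_1[f]\le M_1[(f-\bar\varepsilon)_+]+2M_1y_1$ for some $\bar\varepsilon>0$. Your Step~1 and the cover $\mathcal U$ play no role.

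\textbf{Where the hypothesis enters.} Step~3 is where the hypothesis is needed, and the compactness argument there is not a proof. The claim that a limiting functional is concentrated at a point of $X$ is unsupported. As written, ``infinite on $y_j$ but finite on $x$'' is impossible, since $y_j\le x$. Above all, a functional by itself never produces a compact properly infinite class; for that you need a Cuntz inequality of the form $2w\le w''\ll w$ in some quotient. Moreover, bounding only the $\tau$ with $d_\tau(x)<\infty$ is not enough for \cite[Lemma~1]{robert2010nuclear}; you also need $x\le\infty y_1$.

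The inequality above supplies both. Evaluate it at $t\in X$ and pass to $A/\langle d^1(t)\rangle$. This gives $2M_1[\pi(f(t))]\le M_1[\pi((f-\bar\varepsilon)_+(t))]$, so $M_1[\pi(f(t))]$ is compact and properly infinite, hence zero by hypothesis. Thus $f(t)$ lies in the ideal generated by $d^1(t)$ for every $t$, and Lemma~\ref{lem:CuntzSubC(X,A)} upgrades this to $[f]\le\infty y_1$. Consequently $d_\tau([f])\le 2d_\tau(y_1)$ for \emph{every} $\tau$, with a uniform constant and no contradiction argument. Then $m$-comparison together with \cite[Lemma~1]{robert2010nuclear} gives $x'\le[f]\le 3(m+1)y_1$.
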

\begin{proof}
    We may assume without loss of generality that $A$ is stable, and that there exists $\varepsilon>0$ such that that $x=[f]$ and $x'=[(f-\varepsilon)_+]$ for some $f\in C(X,A)_+$. We adapt the argument from \cite[Proposition~3.4]{RobTik17NucDimNonSimple}. However, distinct technical difficulties arise in the present context that demand additional care. For this reason, we present the full proof in detail:

    Let $m$ be the dimension of $X$, and let $M_1$ be the constant given by Lemma \ref{prp:RedandAmp} (and Corollary \ref{robert_analogy_2}). Set $L=\max\{2M_1,3(m+1)\}$.
    
    Using Lemma \ref{prp:RedandAmp} with $N=2M_1$, there exists $c\in\overline{fC(X,A)f}_+$ such that 
    \[
        [(f-\frac{\varepsilon}{2})_+]\ll 2M_1[c]\ll M_1[f]
    \]
    in $\Cu (C(X,A))$.

    Let $\delta >0$ be such that
    \[
    [(f-\varepsilon)_+]\leq 2M_1[(c-\delta)_+].
    \]
    
    Set
    \[
    d^0:=g_\delta (c)\quad\text{and}\quad d^1:=(1-g_{\delta /2}(c))^{1/2}f(1-g_{\delta/2}(c))^{1/2},
    \]
    where $g_\delta$ is the piecewise linear function that is $0$ from $0$ to $\delta /2$ and $1$ from $\delta$ onward. Further, set $y_0:=[d^0]$ and $y_1:=[d^1]$, where note that $y_0+y_1\leq [f]=x$.

    We get 
    \[
        x'=[(f-\varepsilon)_+]\leq 2M_1[d^0]\leq L[d^0]=Ly_0.
    \]
    Additionally, we also have 
    \begin{equation}\label{eq:510}
        [f]\leq
        [g_{\delta/2}(c)]+[d^1].
    \end{equation}
    
    Now let $\bar{\varepsilon}>0$ be such that
    \[
    2M_1[g_{\delta/2} (c)]\leq M_1[(f-\bar{\varepsilon})_+].
    \]
    Multiplying (\ref{eq:510}) by $2M_1$, we obtain 
    \begin{equation}\label{eq:5102}
        2M_1[f]\leq 2M_1[g_{\delta/2}(c)]+2M_1[d^1]\leq M_1[(f-\bar{\varepsilon})_+]+2M_1[d^1].
    \end{equation}

    Evaluating at any point $t\in X$ is a $^*$-homomorphism and thus preserves Cuntz subequivalence. In particular, one gets
    \[
        2M_1[f(t)]\leq M_1[(f-\bar{\varepsilon})_+(t)]+2M_1[d^1(t)]
    \]
    in $\Cu (A)$ for each $t\in X$.

    Now fix $t\in X$. Passing to the quotient $A/\langle d^1(t)\rangle$, we get 
    \[
        2M_1[\pi(f(t))]\leq M_1[\pi((f-\varepsilon)_+(t))].
    \]

    Thus, the element $M_1[\pi (f(t))]$ is a compact, properly infinite class in $\Cu (A/\langle d^1(t)\rangle)$. Thus, $[\pi (f(t))]=0$ by our assumptions. In other words, $f(t)$ belongs to the ideal generated by $d^1(t)$, which in terms of Cuntz classes is written as $[f(t)]\leq \infty [d^1(t)]:=\sup_n n[d^1 (t)]$ in $\Cu (A)$.

    Using once again that evaluating at $t$ is a $^*$-homomorphism, it follows that it preserves suprema of Cuntz classes. Thus, we have
    \[
        [f(t)]\leq \infty [d^1(t)]=\sup_n n[d^1(t)]=\Cu ({\rm ev}_t)\left(\sup_n n[d^1]\right)=\Cu ({\rm ev}_t)(\infty[d^1]).
    \]

    Let $D^1\in C(X,A)_+$ be such that $[D^1]=\infty [d^1]$. The previous inequality gives 
    \[
        f(t)\precsim D^1(t)
    \]
    in $A$ for each $t\in X$.

     Thus, it follows from Lemma \ref{lem:CuntzSubC(X,A)} that $[f]\leq (m+1) [D^1]$ in $C(X,A)$. In other words, $[f]\leq (m+1) (\infty [d^1])=\infty [d^1]$ in $\Cu (C(X,A))$. In particular, since \[[g_{\delta/2} (c)]\ll [f],\] a finite multiple of $[d^1]$ is Cuntz above $[g_{\delta /2} (c)]$ and, since \[[f]\leq [g_{\delta /2}(c)]+[d^1],\] a finite multiple of $[d^1]$ is above $[f]$. From this, it follows that $d_\tau ([d^1])=\infty$ whenever $d_\tau ([f])=\infty$ for any $[0,\infty]$-valued lower semi-continuous $2$-quasitrace $\tau$ on $C(X,A)$ (recall the definition from Remark \ref{rem:StCompAlmUnperf}).
     
     Now take any ($[0,\infty]$-valued lower semi-continuous) $2$-quasitrace $\tau$ on $C(X,A)$. Evaluating $d_\tau$ on (\ref{eq:5102}) we get 
     \[
        2d_\tau ([f])\leq d_\tau ([f])+ 2d_\tau ([d^1]).
     \]

     Thus, we deduce that $d_\tau ([f])\leq 2d_\tau ([d^1])$. Indeed, if $d_\tau ([f])$ is finite, we simply subtract the right hand side from the left hand side. Otherwise, if $d_\tau ([f])=\infty$, we know that $d_\tau ([d^1])=\infty$.

     Let $\gamma>0$ be such that $2\leq 3(1-\gamma)$. Using that $C(X,A)$ has $m$-comparison and that $d_\tau ([f])\leq (1-\gamma)d_\tau (3[d^1])$, we deduce from \cite[Lemma~1]{robert2010nuclear} that $x'\leq [f]\leq 3(m+1)[d^1]\leq L [d^1]=Ly_1$.
\end{proof}

\begin{theorem}\label{prp:MainNonSimp}
    Let $X$ be a compact metric space, and let $A$ be a pure \ca{}. Assume that $A$ has no quotient whose Cuntz semigroup contains a nonzero, compact, properly infinite Cuntz class. Then, $C(X,A)$ is pure.
\end{theorem}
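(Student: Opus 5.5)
The plan mirrors the proof of Corollary \ref{Purehomogenous}: first treat finite-dimensional $X$, then pass to arbitrary compact metric $X$ via inductive limits.

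Assume first that $X$ is finite-dimensional, say $\dim(X)=m$. We apply Theorem \ref{prp:Gen_DimRed} to $C(X,A)$ and check its hypotheses.
\begin{itemize}
\item Since $A$ is pure, it has strict comparison, so Proposition \ref{comp_c(x,A)} gives that $C(X,A)$ has $m$-comparison.
\item Since $A$ is almost divisible, Corollary \ref{robert_analogy_2} provides a constant $M$ such that for every $N\geq 1$ and every $x'\ll x$ there is $y$ with $x'\ll Ny\ll Mx$.
\end{itemize}
Thus the standing hypotheses of Theorem \ref{prp:Gen_DimRed} hold. The residual finiteness assumption on the quotients of $A$ is exactly what Proposition \ref{prp:CXNoQuot} needs. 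That proposition yields the constant $L$ and the elements $y_0,y_1$ required in condition (ii) of Theorem \ref{prp:Gen_DimRed}. By the implication (ii) $\Rightarrow$ (iii), $C(X,A)$ is pure. This is the only step with real content, and it is already done in Proposition \ref{prp:CXNoQuot}, so the finite-dimensional case is a direct assembly.

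For general compact metric $X$, we write $X$ as an inverse limit $X=\varprojlim X_\lambda$ of finite-dimensional compact metric spaces. For instance, embed $X$ in the Hilbert cube and take images under the projections to finite-dimensional cubes, or use nerves of finite open covers. Dually this gives $C(X)=\varinjlim C(X_\lambda)$, with connecting maps induced by the continuous surjections $X_{\lambda'}\to X_\lambda$. Tensoring with $A$, which is nuclear-free since $C(X_\lambda)$ is commutative and hence nuclear, gives $C(X,A)\cong\varinjlim C(X_\lambda,A)$. Each $C(X_\lambda,A)$ is pure by the finite-dimensional case; note that the hypothesis on $A$ does not depend on the space. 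Since pureness passes to inductive limits \cite[Theorem~3.8]{perera2025extensions}, $C(X,A)$ is pure.

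The only point needing care is the inverse-limit presentation. Because $X$ is metrizable and compact, a countable sequence suffices: embed $X\subseteq[0,1]^{\mathbb{N}}$ and let $X_n$ be the projection of $X$ to $[0,1]^n$. Then $\dim X_n\leq n$, the $X_n$ form an inverse system of surjections, and $X=\varprojlim X_n$. The union of the images of $C(X_n)$ in $C(X)$ is a subalgebra that separates points and contains the constants, so it is dense by Stone--Weierstrass. Hence $C(X,A)=\overline{\bigcup_n C(X_n,A)}$, which is a sequential inductive limit with injective connecting maps. I expect no further obstacle.
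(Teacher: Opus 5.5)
Your proposal is correct and follows the paper's proof: the finite-dimensional case combines Proposition~\ref{comp_c(x,A)}, Corollary~\ref{robert_analogy_2}, Proposition~\ref{prp:CXNoQuot} and the implication (ii)$\Rightarrow$(iii) of Theorem~\ref{prp:Gen_DimRed}, and the general case writes $C(X,A)$ as an inductive limit of algebras $C(X_n,A)$ with $X_n$ finite-dimensional and invokes \cite[Theorem~3.8]{perera2025extensions}. Your explicit construction of the $X_n$ as the projections of $X\subseteq[0,1]^{\mathbb{N}}$ to $[0,1]^n$, with density of $\bigcup_n C(X_n,A)$ obtained from Stone--Weierstrass, spells out a step the paper leaves implicit.
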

\begin{proof}
    If $X$ is finite-dimensional, the result follows directly from Proposition \ref{prp:CXNoQuot} together with Theorem \ref{prp:Gen_DimRed} (together with Proposition \ref{comp_c(x,A)} and Corollary \ref{robert_analogy_2}).

    If $X$ is not finite-dimensional, proceed as in the proof of Corollary \ref{Purehomogenous} and write $C(X,A)$ as $\lim_\lambda C(X_\lambda ,A)$ with $X_\lambda$ finite-dimensional to reduce to the finite-dimensional case.
\end{proof}

\section{Pureness of Recursive Subhomogeneous Algebras}

In this section, we use the permanence properties established in \cite{perera2025extensions}
to show that pureness extends to a broader class of \ca{s}. In particular,
we prove that any recursive subhomogeneous $C^*$-algebra over a unital, simple,
pure $C^*$-algebra $D$ is itself pure, and that the same holds for the tensor
product $R \otimes D$, where $R$ is a recursive subhomogeneous algebra over
$\mathbb{C}$. As a consequence, we deduce that a locally trivial continuous
$C(X)$-algebra with a simple, unital, pure fibre is pure, and that the tensor
product of a unital separable ASH-algebra with a simple, pure \ca{} is also pure. Analogous results hold for non-simple, pure, residually stably finite \ca{s}; see Remark \ref{rem:Final}.

We begin by recalling the definition of recursive subhomogeneous 
algebras built over a \ca{}~$D$. For more details the reader is referred to Section 3 in  \cite{archey2020structure}.

We start with the following definition:
\begin{defn}\label{pullback}
Let $A$, $B$ and $C$ be \ca{s}, and let $\ph \colon
A \to C$ and $\psi \colon B \to C$ be $^*$--homomorphisms.
Then the associated {\emph{pullback \ca{}}} $A
\oplus_{C, \ph, \psi} B$ is defined by
\[
A \oplus_{C, \ph, \psi} B
 = \bset{ (a, b) \in A \oplus B \colon \ph (a) = \psi (b) }.
\]
\end{defn}

Note that $A \oplus_{C, \ph, \psi} B$ can be described by the commutative diagram 
\[
\begin{tikzcd}[row sep=3.5em, column sep=4.5em]
A \oplus_{C,\varphi,\psi} B
    \arrow{r}{\pi_B}
    \arrow{d}[swap]{\pi_A}
&
B \arrow{d}{\psi}
\\
A \arrow{r}{\varphi}
&
C
\end{tikzcd}
\]

\noindent
where \(\pi_A(a,b)=a\) and \(\pi_B(a,b)=b\).

The following lemma describes how pureness behaves with respect to pullbacks.

\begin{lem}\label{PurePullback}
Let $A$, $B$ and $C$ be \ca{s},
and let $\ph \colon A \to C$ and $\psi \colon B \to C$
be $*$-homomorphisms.
Let $P = A \oplus_{C, \ph, \ps} B$ be the associated pullback. 
If $\ps$
is surjective and both $A$ and $B$ are pure, then $P$ is
also pure.
\end{lem}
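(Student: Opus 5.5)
The natural route is to realize $P$ as an extension of pure \ca{s} and then invoke the permanence results of \cite{perera2025extensions}. Since $\psi$ is surjective, the projection $\pi_A\colon P\to A$ is surjective: given $a\in A$, choose $b\in B$ with $\psi(b)=\varphi(a)$, and then $(a,b)\in P$. Its kernel is
\[
\ker(\pi_A)=\bset{(0,b)\colon \psi(b)=0}\cong \ker(\psi).
\]
This gives a short exact sequence
\[
0\to \ker(\psi)\to P\xrightarrow{\pi_A} A\to 0 .
\]

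Next I would check that both outer terms are pure. The quotient $A$ is pure by hypothesis. The ideal $\ker(\psi)$ is a closed two-sided ideal of the pure \ca{} $B$, so I would show that pureness passes to ideals. This should be a standard permanence property, and I expect it is recorded in \cite{perera2025extensions} or \cite{AntPerThiVil24arX:PureCAlgs}. The reason it holds is that $\Cu(I)$ identifies with an ideal of $\Cu(B)$ that is downward hereditary and closed under suprema of increasing sequences. Almost unperforation restricts to such an ideal directly. For almost divisibility, given $x'\ll x$ in $\Cu(I)$, any $y$ produced in $\Cu(B)$ with $Ny\leq x$ lies below $x$, hence belongs to $\Cu(I)$. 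Note also that compact containment $\ll$ in $\Cu(I)$ agrees with $\ll$ in $\Cu(B)$ under this identification, so the definitions transfer without change.

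Finally, I would apply the theorem of \cite{perera2025extensions} that an extension of pure \ca{s} is pure. This yields pureness of $P$.

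The main obstacle is the step showing that ideals inherit pureness, or more precisely locating the exact form in which \cite{perera2025extensions} states it. I would also confirm that the extension theorem there is stated without additional hypotheses, such as separability or $\sigma$-unitality, that $P$ might fail to satisfy. If separability were required, I would reduce to separable sub-\ca{s} of the pullback, as was done in the proof of Theorem \ref{prp:Gen_DimRed} via \cite[Proposition~6.1]{ThiVil21DimCu2}, using that pureness is separably inheritable. The surjectivity of $\psi$ is used only to get surjectivity of $\pi_A$ and hence the exact sequence.
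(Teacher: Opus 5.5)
Your proposal is correct and matches the paper's proof: both realize $P$ as the extension $0\to\ker(\psi)\to P\xrightarrow{\pi_A} A\to 0$, using surjectivity of $\psi$ to get surjectivity of $\pi_A$, and then apply \cite[Theorem~4.11]{perera2025extensions}. The paper gets pureness of $\ker(\psi)$ from the ``only if'' direction of that same theorem, applied to $0\to\ker(\psi)\to B\xrightarrow{\psi} C\to 0$, so the step you flagged as the main obstacle is already covered there; your direct Cuntz-semigroup argument that ideals inherit pureness is also sound.
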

\begin{proof}
As shown in \cite[Theorem 4.11]{perera2025extensions}, an extension of \ca{s} 
\[
0\rightarrow J\rightarrow A\rightarrow B\rightarrow 0
\]
satisfies that $A$ is pure if and only if $J$ and $B$ are pure. 
 
We will make use of this permanence property to conclude the desired result. To this end, consider the following two extensions coming from the pullback construction: The first extension follows by surjectivity of $\ps$. Furthermore, it is straightforward to check that surjectivity of $\ps$ implies surjectivity of $\pi_A$, which gives the second extension. 
Thus, we have
\begin{equation*}
    \begin{split}
        &0\rightarrow \text{Ker}(\ps)\rightarrow B\xrightarrow{\ps} C\rightarrow 0,\,\,\mbox{and}\\
        &0\rightarrow \text{Ker}(\ps)\xrightarrow {i}P\xrightarrow {\pi_A} A\rightarrow 0,
    \end{split}
\end{equation*}
 where $i(b)=(0,b)$, and $\pi_A(a,b)=a$. 

Then, $\text{ker}(\ps)$ is pure since $B$ is pure, and $P$ is pure since $\text{Ker}(\ps)$ and $A$ are. 
\end{proof}

The following definition is from \cite{archey2020structure}, and generalizes the class of recursive subhomogeneous algebras by taking $D=\mathbb{C}$ (see \cite{PhRsha1}).

\begin{defn}[{\cite[Definition~3.2]{archey2020structure}}]\label{RSHA}
For a simple unital \ca{} $D$, the class $\mathcal{R}$ of 
{\emph{recursive subhomogeneous algebras over $D$}} is
the smallest class of \ca{s} closed under isomorphism such that:
\begin{enumerate}
\item\label{9223_RSHA_1}
$C (X, M_{n} (D)) \in \mathcal{R}$ whenever $X$ is a compact Hausdorff space and $n \geq 1$.
\item\label{9223_RSHA_2}
Any pullback of the form 
\begin{align*}
& B \oplus_{C (X^{(0)}, \, M_{n} (D)), \, \ph, \, \rh} C (X, M_{n} (D))
\\
& \hspace*{3em} {\mbox{}}
  = \bset{ (b, f) \in B \oplus C (X, M_{n} (D)) \colon
        \ph (b) = f |_{X^{(0)}} }
\end{align*}
is in $\mathcal{R}$ whenever $B \in \mathcal{R}$, $X$ is compact Hausdorff,
$n \geq 1$, $X^{(0)} \subseteq X$ is closed (possibly empty),
$\ph \colon B \to C (X^{(0)}, \, M_{n} (D))$
is any unital $^*$-homomorphism
(the zero \hm{} if $X^{(0)}$ is empty), and
$\rho \colon C (X, M_{n} (D)) \to C (X^{(0)}, \, M_{n} (D))$ is the
restriction $^*$-homomorphism.
\end{enumerate}
\end{defn}

\begin{ntn}\label{RSHAMachinery}
As noted in \cite[Definition~3.3]{archey2020structure}, every recursive subhomogenous algebra over $D$ is of the form
\[
R_D \cong
  \left[ \cdots \left[ \left[ C_{0} \oplus_{C_{1}^{(0)}, \ph_1, \rh_1}
              C_{1} \right]
    \oplus_{C_{2}^{(0)}, \ph_2, \rh_2} \right] \cdots \right]
   \oplus_{C_{l}^{(0)}, \ph_l, \rh_l} C_{l},
\]
where $C_{k} = C (X_{k}, \, M_{n (k)} (D))$ and $C_{k}^{(0)} = C \bigl( X_{k}^{(0)}, \, M_{n (k)} (D) \bigr)$ for positive integers $n(k)$ and $X_{k}^{(0)} \subseteq X_{k}$ compact sets ($X_k^0$ possibly empty), and where $\rho_{k} \colon C_{k} \to C_{k}^{(0)}$ are the restriction maps and $\ph_k\colon C_{k-1}\rightarrow C_k^{(0)}$ is any unital $^*$-homomorphism. We call the number $l$ the \emph{length} of the decomposition.
\end{ntn}

\begin{prop}\label{DpureRSHA}
Let $D$ be a unital simple pure \ca. Let $R_D$ be a recursive subhomogeneous algebra over $D$ and let $R$ be any recursive homogeneous algebra over $\mathbb{C}$.
Then both $R_D$ and $R \otimes D$ are pure.
\end{prop}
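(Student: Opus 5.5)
We argue by induction on the length $l$ of a decomposition of $R_D$ as in Notation~\ref{RSHAMachinery}, using Corollary~\ref{Purehomogenous} for the building blocks and Lemma~\ref{PurePullback} for the gluing.

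\emph{Building blocks.} Each $C_k = C(X_k, M_{n(k)}(D))$ is isomorphic to $C(X_k, M_{n(k)}(D))$ with $M_{n(k)}(D)$ simple. Since $\Cu(M_n(D))\cong\Cu(D)$, the algebra $M_{n(k)}(D)$ is pure. If $X_k$ is metrizable, Corollary~\ref{Purehomogenous} then shows that $C_k$ is pure.

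The main obstacle is that Definition~\ref{RSHA} allows arbitrary compact Hausdorff spaces, whereas Corollary~\ref{Purehomogenous} is stated only for compact metric spaces. To handle a general $X$, I would write $C(X)$ as the inductive limit of its separable unital sub-\ca{s} $C(Y_\lambda)$, where the $Y_\lambda$ are compact metrizable quotients of $X$. Then $C(X,M_n(D))=\lim_\lambda C(Y_\lambda,M_n(D))$, and pureness passes to inductive limits \cite[Theorem~3.8]{perera2025extensions}. So every $C_k$ is pure. (Alternatively, I would note that the corollary's proof never uses metrizability beyond this reduction.)

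\emph{Induction.} For $l=0$ we have $R_D\cong C_0$, which is pure by the previous step. Now suppose the algebra $B$ of length $l-1$ is pure. Then
\[
R_D\cong B\oplus_{C_l^{(0)},\varphi_l,\rho_l} C_l .
\]
The restriction map $\rho_l\colon C(X_l,M_{n(l)}(D))\to C(X_l^{(0)},M_{n(l)}(D))$ is surjective, by Tietze extension applied entrywise. Indeed, $C(X,E)\cong C(X)\otimes E$, and restriction is surjective on $C(X)$; surjectivity survives the minimal tensor product with $E$ because $C(X)$ is nuclear/exact. Applying Lemma~\ref{PurePullback} with $A=B$ and $\psi=\rho_l$, the pullback $R_D$ is pure. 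If $X_l^{(0)}=\emptyset$, the pullback is simply $B\oplus C_l$, which is pure because direct sums of pure algebras are pure.

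\emph{The tensor product $R\otimes D$.} Take a decomposition of $R$ with blocks $C(X_k,M_{n(k)})$ and tensor it with $D$. We have $C(X_k,M_{n(k)})\otimes D\cong C(X_k,M_{n(k)}(D))$. Tensoring a pullback along a surjection with $D$ gives the corresponding pullback, since the sequences $0\to\ker\rho\to C\to C^{(0)}\to 0$ remain exact and every algebra involved is nuclear on the $R$ side. Moreover, the connecting maps $\varphi_k\otimes\mathrm{id}_D$ are unital. Hence $R\otimes D$ is a recursive subhomogeneous algebra over $D$ in the sense of Definition~\ref{RSHA}, and the first part applies. Alternatively, one can rerun the same induction directly, using Lemma~\ref{PurePullback} at each step.
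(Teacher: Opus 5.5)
Your proof is correct and takes essentially the paper's route: induction on the length of the decomposition, with Corollary~\ref{Purehomogenous} for the blocks $C(X,M_n(D))$, Lemma~\ref{PurePullback} (via surjectivity of the restriction map) for the gluing, and, for $R\otimes D$, the fact that tensoring with $D$ preserves these pullbacks, which the paper cites from Pedersen's pullback theorem rather than checking through exactness as you do. Your extra reduction for non-metrizable $X$, via the separable subalgebras $C(Y_\lambda)\subseteq C(X)$ and inductive limits, is a genuine improvement: the paper applies Corollary~\ref{Purehomogenous}, which is stated only for compact metric spaces, to arbitrary compact Hausdorff $X$ without comment.
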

\begin{proof}
   We first show that $R_D$ is pure; an analogous proof applies to $R \otimes D$.
We argue by induction on the length of a recursive subhomogeneous decomposition
of $R_D$ over $D$.

The base case is when $R_D = C(X, M_{n}(D))$. 
Since $M_{n}(D)$ is again simple and pure, 
Corollary~\ref{Purehomogenous} implies that 
$C(X, M_{n}(D))$ is pure. For the inductive step, we may assume that
there is  unital pure \ca~$R_D'$,
$m \in \N$, a compact Hausdorff space~$X$,
a closed subset $X^{(0)} \subseteq X$,
and a $^*$-homomorphism
$\ph \colon R_D' \to C \bigl( X^{(0)}, \, M_{m} (D) \bigr)$
such that $R_D$ is the pullback defined as
\[
R_D = \bset{ (a, f) \in R_D' \oplus C (X, M_{m} (D))
   \colon \ph (a) = f |_{X^{(0)}}  }
\]
Since $f \mapsto f |_{X^{(0)}}$ is surjective and both
$R_D'$ and $C (X, M_{n} (D))$
are pure, it follows from Lemma~\ref{PurePullback}
that $R_D$ is pure.
This completes the proof. 

Next, we show that $R \otimes D$ is pure. As before, we proceed by induction on
the length of a decomposition of $R$.
 The base case is when $R= C(X)\otimes M_n$, which is pure by Corollary~\ref{Purehomogenous}. For the inductive step, write 
     \[
R = \bset{ (a, f) \in R' \oplus C (X, M_{m})
   \colon \ph (a) = f |_{X^{(0)}}  },
\]
where $R'$ is a recursive subhomogenous algebra such that $R'\otimes A$ is pure, $m \in \N$, $X^{(0)} \subset X$,
and 
$\ph \colon R' \to C \bigl( X^{(0)}, \, M_{m}) \bigr)$ is a $^*$-homorphism. By \cite[Theorem~3.8]{pedersen1999pullback}, $R\otimes A$ can be expressed as a pullback
\[
R'\otimes A \oplus_{ C(X^{(0)},M_m(A))} C(X, M_m(A)),
\]
with structure maps
\[
\ph\otimes \mathrm{id}_A
\quad\text{and}\quad
f|_{X^{(0)}}\otimes \mathrm{id}_A.
\]
By Lemma~\ref{PurePullback}, we conclude that $R\otimes A$ is pure, hence completing the induction step.
\end{proof}

Let $X$ be a compact Hausdorff space, and let $D$ be a simple unital \ca{}.
Let $A$ be a locally trivial $C(X)$-algebra with fiber $D$. Then, by repeated use of
Lemma 2.4 of \cite{dadarlat2009continuous}, we see that A is a recursive subhomogeneous algebra over $D$. Hence the following result follows from Proposition~\ref{DpureRSHA}.
\begin{cor}
    Let $D$ be a simple unital pure \ca{}. Let $X$ be a compact Hausdorff space and let $A$ be a locally trivial $C(X)$-algebra with fiber $D$. Then, $A$ is pure.
\end{cor}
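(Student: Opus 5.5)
The statement follows almost immediately from Proposition~\ref{DpureRSHA}, once we know that $A$ is a recursive subhomogeneous algebra over $D$. So the plan is to establish that structural fact carefully and then invoke the proposition.

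First, use local triviality. For each point of $X$ there is a closed neighbourhood $F$ such that the restriction $A|_F$ is isomorphic to $C(F,D)$. By compactness, finitely many such closed sets $F_1,\dots,F_l$ cover $X$, each with $A|_{F_k}\cong C(F_k,D)$. Set $Y_k=F_1\cup\dots\cup F_k$. I will show by induction on $k$ that $A|_{Y_k}$ is a recursive subhomogeneous algebra over $D$.

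The base case is $A|_{Y_1}\cong C(F_1,D)$. This has the form of Definition~\ref{RSHA}~(\ref{9223_RSHA_1}) with $n=1$.

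For the inductive step, I would use \cite[Lemma~2.4]{dadarlat2009continuous}. For a $C(X)$-algebra and closed sets $Y,F$, it identifies $A|_{Y\cup F}$ with the pullback
\[
A|_{Y}\oplus_{A|_{Y\cap F}} A|_{F}
\]
along the two restriction maps. Here $A|_{F_{k+1}}\cong C(F_{k+1},D)$, so $A|_{Y_k\cap F_{k+1}}$ is identified with $C(Y_k\cap F_{k+1},D)$, and the map from $A|_{F_{k+1}}$ becomes the restriction map $\rho$. This map is surjective, as Definition~\ref{RSHA}~(\ref{9223_RSHA_2}) requires. The other map is $\varphi\colon A|_{Y_k}\to C(Y_k\cap F_{k+1},D)$, and it is unital because it is a quotient map of unital $C(X)$-algebras. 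The pullback therefore has exactly the shape of Definition~\ref{RSHA}~(\ref{9223_RSHA_2}) with $n=1$ and $X^{(0)}=Y_k\cap F_{k+1}$, which is closed and possibly empty. Taking $k=l$ gives $A=A|_{Y_l}$ in $\mathcal{R}$.

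The main point needing care is the compatibility of identifications. The isomorphism $A|_{F_{k+1}}\cong C(F_{k+1},D)$ must intertwine the restriction from $F_{k+1}$ to the intersection with the ordinary restriction of functions. This holds because it is an isomorphism of $C(F_{k+1})$-algebras, and such isomorphisms respect restrictions to closed subsets. Once that is checked, Proposition~\ref{DpureRSHA}, which applies because $D$ is unital, simple and pure, shows that $A$ is pure.

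Alternatively, the pureness conclusion can be proved directly by the same induction, without passing through Definition~\ref{RSHA}. Each $C(F_k,D)$ is pure by Corollary~\ref{Purehomogenous}. At each step Lemma~\ref{PurePullback} applies, because the map $A|_{F_{k+1}}\to A|_{Y_k\cap F_{k+1}}$ is surjective and both $A|_{Y_k}$ and $C(F_{k+1},D)$ are pure. This route avoids the unitality bookkeeping entirely.
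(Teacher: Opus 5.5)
Your proposal is correct and is the paper's argument: the paper also obtains that $A$ is a recursive subhomogeneous algebra over $D$ by repeated use of \cite[Lemma~2.4]{dadarlat2009continuous} and then applies Proposition~\ref{DpureRSHA}. Your induction over $Y_k=F_1\cup\dots\cup F_k$, including the check that the $C(F_k)$-linear trivializations intertwine the restriction maps, just fills in that step. Your alternative direct route is the induction from the proof of Proposition~\ref{DpureRSHA}, run on this particular decomposition; note only that Corollary~\ref{Purehomogenous} is stated for compact metric spaces, so for non-metrizable $F_k$ you need its inductive-limit extension to compact Hausdorff spaces, which the paper also uses implicitly in the base case of Proposition~\ref{DpureRSHA}.
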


The following is another implication of Proposition~\ref{DpureRSHA}.  Since any unital separable ASH-algebra can be written as an inductive limit of recursive subhomogeneous algebras, and limits of pure \ca{s} are pure \cite[Theorem~3.8]{perera2025extensions}, we conclude that

\begin{theorem}\label{prp:ASHtenPure}
   Let $B$ be a unital separable ASH-algebra and $A$ be simple and pure. Then, $B\otimes A$ is pure.
\end{theorem}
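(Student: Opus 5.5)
The plan is to reduce to Proposition \ref{DpureRSHA} and then pass to inductive limits. Since $B$ is a unital separable ASH-algebra, write it as an inductive limit $B=\varinjlim (R_k,\theta_k)$ of unital recursive subhomogeneous algebras $R_k$ over $\mathbb{C}$, with $^*$-homomorphisms $\theta_k\colon R_k\to R_{k+1}$. This is the standard fact that unital separable ASH-algebras are limits of RSH algebras; in Phillips' framework it may require passing to subsystems, but I will treat it as given.

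Next, tensor with $A$. The minimal tensor product commutes with inductive limits, so
\[
B\otimes A\cong \varinjlim (R_k\otimes A,\ \theta_k\otimes \mathrm{id}_A).
\]
Each $R_k$ is type I and hence nuclear, so there is no ambiguity in the choice of tensor norm here.

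Third, show that each $R_k\otimes A$ is pure. The argument in Proposition \ref{DpureRSHA} for $R\otimes D$ proceeds by induction on the length of the decomposition, and I will run it with $A$ in place of $D$. For the base case, $C(X)\otimes M_n\otimes A\cong C(X,M_n(A))$. Here $M_n(A)$ is simple and pure, because pureness is a property of $\Cu(A)=\Cu(M_n(A))$. Corollary \ref{Purehomogenous} then applies. It needs $X$ compact metric, which holds for separable RSH algebras since their spaces can be taken metrizable. For the inductive step, tensor the pullback diagram with $A$ and use \cite[Theorem~3.8]{pedersen1999pullback} to identify $R_k\otimes A$ with a pullback of $R'\otimes A$ and $C(X,M_m(A))$ over $C(X^{(0)},M_m(A))$. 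Restriction remains surjective after tensoring, and Lemma \ref{PurePullback} gives pureness. Unitality of $A$ is never used in this step: Corollary \ref{Purehomogenous} does not require it, and $C(X)\otimes M_n\otimes A$ makes sense regardless.

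Finally, pureness passes to inductive limits by \cite[Theorem~3.8]{perera2025extensions}, so $B\otimes A$ is pure.

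The main obstacle is the compatibility of the pullback decomposition with tensoring by $A$. Exactness of $0\to \ker\rho\otimes A\to C(X,M_m)\otimes A\to C(X^{(0)},M_m)\otimes A\to 0$ is automatic by nuclearity of the commutative pieces. Identifying $R\otimes A$ with the pullback, however, needs Pedersen's result, or a direct check via exactness of $-\otimes A$ on sequences of nuclear algebras. I would verify this using the extension $0\to\ker\rho\to R\to R'\to 0$ tensored with $A$, which is exact because $\ker\rho$ and $R'$ are nuclear. I would then apply the extension permanence of pureness \cite[Theorem~4.11]{perera2025extensions} directly. This avoids the pullback identification altogether, since $\ker\rho\otimes A\cong C_0(X\setminus X^{(0)},M_m(A))$ is an ideal of the pure algebra $C(X,M_m(A))$ and is therefore pure.
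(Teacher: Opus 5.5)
Your proposal is correct and follows the paper's route. Write $B$ as an inductive limit of RSH algebras, prove each $R_k\otimes A$ pure by the induction of Proposition \ref{DpureRSHA} (Corollary \ref{Purehomogenous} for the base case, then Pedersen's pullback theorem with Lemma \ref{PurePullback}, or equivalently \cite{perera2025extensions} applied directly to $0\to\ker\rho\otimes A\to R\otimes A\to R'\otimes A\to 0$), and finish with permanence of pureness under inductive limits. Your side remarks usefully fill points the paper glosses over: unitality of $A$ is never used, the spaces must be metrizable for Corollary \ref{Purehomogenous}, and nuclearity of the $R_k$ makes the tensor norm unambiguous.
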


As a concrete example, pure \ca{s} can be obtaind by tensoring any Villadsen algebra of the first kind \cite{Vil99SRSimpleCa} with any reduced group \ca{} of a free group \cite{AGKEP25}.
   
\begin{cor}
    Let $B$ be a Villadsen algebra of the first kind. Then $B\otimes C_r^*(\mathbb{F}_2)$ is pure.
\end{cor}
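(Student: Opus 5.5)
The plan is to realise $B\otimes C_r^*(\mathbb{F}_2)$ as an instance of Theorem~\ref{prp:ASHtenPure}, taking $B$ in the role of the ASH-algebra and $A=C_r^*(\mathbb{F}_2)$ in the role of the simple pure \ca{}. So I need to check two hypotheses separately: that $B$ is a unital separable ASH-algebra, and that $C_r^*(\mathbb{F}_2)$ is simple and pure.

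For the first hypothesis, recall that a Villadsen algebra of the first kind \cite{Vil99SRSimpleCa} is a unital inductive limit of algebras of the form $p_k\,C(X_k,M_{n_k})\,p_k$. Here $X_k$ is a finite product of spheres or of other compact metric spaces, and $p_k$ is a projection. Each such corner of a homogeneous algebra is a recursive subhomogeneous algebra over $\mathbb{C}$. It is also a locally trivial $C(X_k)$-algebra with matrix fibres, so it is RSH by the remark via \cite{dadarlat2009continuous} preceding the locally trivial corollary. The connecting maps are unital and the sequence is countable, so $B$ is a unital separable ASH-algebra.

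For the second hypothesis, Powers' theorem gives that $C_r^*(\mathbb{F}_2)$ is simple, with a unique trace. Its pureness, meaning strict comparison together with almost divisibility, is the result of \cite{AGKEP25}. More generally, $\mathbb{F}_2$ is acylindrically hyperbolic with trivial finite radical, so \cite{FloLisCobPag:PureTwi} also gives that $C_r^*(\mathbb{F}_2)$ is simple and pure. Once both hypotheses are in place, Theorem~\ref{prp:ASHtenPure} yields that $B\otimes C_r^*(\mathbb{F}_2)$ is pure.

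The only point needing care is citation bookkeeping. Strict comparison for $C_r^*(\mathbb{F}_2)$ on its own would not suffice; almost divisibility is also required, and I expect to take that from the cited results, for instance through the equivalence of pureness with strict comparison for simple exact algebras of this kind established there. If that route fails, there is a fallback. Since $C_r^*(\mathbb{F}_2)$ is simple with unique trace and strict comparison, almost divisibility should follow from standard arguments showing that the rank function of an element without compact part has a dense range of values. I would try this only if a direct citation is unavailable.
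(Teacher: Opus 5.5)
Your proposal is correct and follows the paper's route exactly: the corollary is Theorem~\ref{prp:ASHtenPure} applied with two inputs. The first is the Villadsen algebra, a unital separable AH (hence ASH) algebra, as the ASH factor; the second is $C_r^*(\mathbb{F}_2)$, simple by Powers and pure by \cite{AGKEP25}, as the simple pure factor. Your check that the building blocks are recursive subhomogeneous is a detail the paper leaves implicit, and your fallback for almost divisibility is not needed given the citation.
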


We end this section with the following question:
\begin{qst}\label{qst:VillZstab}
    Let $B$ be a Villadsen algebra of the first kind without strict comparison (for example, Toms' \cite{Tom08ClassificationNuclear}). Is $B\otimes C_r^*(\mathbb{F}_2)$ $\mathcal{Z}$-stable?
\end{qst}

\begin{rem}\label{rem:Final}
    The analogue of Theorem \ref{prp:ASHtenPure} for pure, residually stably finite \ca{s} holds by using Theorem \ref{prp:MainNonSimp} instead of Corollary \ref{Purehomogenous}.
\end{rem}

\end{document}